\theoremstyle{plain}
\newtheorem{thm}{Theorem}[section]
\newtheorem{lem}[thm]{Lemma}
\newtheorem{prop}[thm]{Proposition}
\newtheorem{cor}[thm]{Corollary}
\theoremstyle{definition}
\newtheorem{dfn}[thm]{Definition}
\newtheorem{bsp}[thm]{Example}
\newtheorem{exm}[thm]{Example}
\newtheorem{rem}[thm]{Remark}
\newcommand{\nset}{\mathds{N}}
\newcommand{\zset}{\mathds{Z}}
\newcommand{\qset}{\mathds{Q}}
\newcommand{\rset}{\mathds{R}}
\newcommand{\cset}{\mathds{C}}
\newcommand{\pset}{\mathds{P}}
\newcommand{\diag}{\mathrm{diag}\,}
\newcommand{\diff}{\mathrm{d}}
\newcommand{\lin}{\mathrm{lin}\,}
\newcommand{\pos}{\mathrm{Pos}}
\newcommand{\rank}{\mathrm{rank}\,}
\newcommand{\und}{\;\wedge\;}
\newcommand{\inter}{\mathrm{int}\,}
\newcommand{\cA}{\mathcal{A}}
\newcommand{\cat}{\mathcal{C}}
\newcommand{\cH}{\mathcal{H}}
\newcommand{\cS}{\mathcal{S}}
\newcommand{\cX}{\mathcal{X}}
\newcommand{\cV}{\mathcal{V}}
\newcommand{\cZ}{\mathcal{Z}}
\newcommand{\sA}{\mathsf{A}}
\newcommand{\sB}{\mathsf{B}}
\newcommand{\fA}{\mathfrak{A}}
\newcommand{\fM}{\mathfrak{M}}
\newcommand{\fc}{\mathfrak{c}}
\author[P.\ J.\ di~Dio]{Philipp J.\ di~Dio}
\address{Technische Universit\"at Berlin, Institut f\"ur Mathematik, Stra\ss{}e des 17.\ Juni 136, D-10623 Berlin, Germany}
\email{didio@tu-berlin.de}
\author[M. Kummer]{Mario Kummer}
\address{Technische Universit\"at Dresden, Fakult\"at Mathematik, Institut f\"ur Geometrie, Zellescher Weg 12-14, D-01062 Dresden, Germany}
\email{mario.kummer@tu-dresden.de}
\begin{document}
\maketitle

\begin{abstract}
In this paper we improve the bounds for the Carath\'eodory number, especially on algebraic varieties and with small gaps (not all monomials are present). We provide explicit lower and upper bounds on algebraic varieties, $\rset^n$, and $[0,1]^n$. We also treat moment problems with small gaps. We find that for every $\varepsilon>0$ and $d\in\nset$ there is a $n\in\nset$ such that we can construct a moment functional $L:\rset[x_1,\dots,x_n]_{\leq d}\rightarrow\rset$ which needs at least $(1-\varepsilon)\cdot\left(\begin{smallmatrix} n+d\\ n\end{smallmatrix}\right)$ atoms $l_{x_i}$. Consequences and results for the Hankel matrix and flat extension are gained. We find that there are moment functionals $L:\rset[x_1,\dots,x_n]_{\leq 2d}\rightarrow\rset$ which need to be extended to the worst case degree $4d$, $\tilde{L}:\rset[x_1,\dots,x_n]_{\leq 4d}\rightarrow\rset$, in order to have a flat extension.
\end{abstract}

\noindent
\textbf{AMS  Subject  Classification (2010)}. 44A60, 14P99, 30E05, 65D32, 35R30.

\noindent
\textbf{Key  words:} truncated moment problem, Carath\'eodory number, flat extension, generalized eigenvalues, algebraic variety, Hilbert function

\setcounter{tocdepth}{1}
\tableofcontents

\section{Introduction}

The theory of (truncated) moment sequences is a field of diverse applications and connections to numerous other mathematical fields, see e.g.\ \cite{stielt94,shohat43,akhiezClassical,kreinMarkovMomentProblem,kemper68,kemper87,landauMomAMSProc, marshallPosPoly,lauren09,fialkow10,lasserreSemiAlgOpt,schmudMomentBook}, and references therein. For more on recent advances in the reconstruction of measures from moments see e.g.\ \cite{dai92,milanf95,curto05,laurent05,gravin12,helton12,nie14,fialkoMomProbSurv,fialkow17,brehard19,didio19DerivMom}, and references therein.

A crucial fact in the theory of truncated moment sequences is the {Richter (Richter--Rogosinski--Rosenbloom)} Theorem \cite{richte57,rogosi58,rosenb52} which states that every truncated moment sequence is a convex combination of finitely many Dirac measures, see also \Cref{thm:richter}. The Carath\'eodory number is the minimal number $N$ such that every truncated moment sequence (with fixed truncation) is a sum of $N$ atoms, i.e., Dirac measures. It has been studied in several contexts but in most cases the precise value of the Carath\'eodory number is not known \cite{richte57,stroud71,moller76,reznick92,kunertPhD14,rienerOptima,didio17Cara,didioConeArXiv}.

In this work we proceed the study of Carath\'eodory numbers. We treat moment sequences with small gaps (see \Cref{sec:gaps}), moment sequences of measures supported on algebraic varieties (\Cref{sec:smoothcurves}), and the multidimensional polynomial case on $\rset^n$ and $[0,1]^n$ (\Cref{sec:cara}). For moment functionals with small gaps we find explicit lower and upper bounds for dimension $n=1$ based on Descartes' rule of signs, see \Cref{thm:sparsebounds}. For moment functionals $L:\rset[\cX]_{\leq 2d}\rightarrow\rset$ on polynomial functions on an algebraic set $\cX\subset\rset^n$ and for sufficiently large $d$, \Cref{thm:maincara} yields an upper bound of $P(2d)-1$ and a lower bound of
\[P(2d)-k\cdot P(d)+\binom k2,\]
where $P$ is the Hilbert polynomial and $k$ the dimension of $\cX$. In the case $\cX=\rset^n$ and $L:\rset[x_1,\dots,x_n]_{\leq 2d}\rightarrow\rset$, this gives the lower bound
\[\begin{pmatrix} n+2d\\ n\end{pmatrix} - n\cdot \begin{pmatrix} n+d\\ n\end{pmatrix} + \begin{pmatrix} n\\ 2\end{pmatrix}\]
(\Cref{thm:lowerboundsRn}). We obtain similar bounds for odd degrees and the case $\cX=[0,1]^n$ in \Cref{sec:cara}. In \Cref{sec:flat} we discuss implications of these bounds, when $n\rightarrow\infty$ and $d\rightarrow\infty$. We show that there are moment functionals $L:\rset[x_1,\dots,x_n]_{\leq 2d}\rightarrow\rset$ that behave as bad as possible under flat extensions, see \Cref{thm:flatExtensionBound} for the precise statement. For literature on flat extensions in this context see \cite{curto2,curto3,lauren09,schmudMomentBook} and the references therein.

\section{Preliminaries}

\subsection{Truncated Moment Problem} Let $\cA$ be a (finite dimensional) real vector space of measurable functions on a measurable space $(\cX,\fA)$. Denote by $L:\cA\rightarrow\rset$ a continuous linear functional. If there is a (positive) measure $\mu$ on $(\cX,\fA)$ such that
\begin{equation}\label{eq:momentFunctionalMeasure}
L(a) = \int_\cX a(x)~\diff\mu(x),\ \text{for all}\ a\in\cA,
\end{equation}
then $L$ is called a moment functional. If $\cA$ is finite dimensional, it is a truncated moment functional. By $\sA = \{a_1,\dots,a_m\}$ we denote a basis of the $m$-dimensional real vector space $\cA$ and by
\[s_i := L(a_i)\]
the $a_i$-th (or simply $i$-th) moment of $L$ (or $\mu$ for a $\mu$ as in (\ref{eq:momentFunctionalMeasure})). Given a sequence $s = (s_1,\dots,s_m)\in\rset^m$ we define the \emph{Riesz functional} $L_s$ by setting $L_s(a_i) = s_i$ for all $i=1,\dots,m$ and extending it linearly to $\cA$, i.e., the Riesz functional induces a bijection between moment sequences $s=(s_1,\dots,s_m)$ and moment functionals $L = L_s$. By $\fM_\sA$ we denote the set of all measures on $(\cX,\fA)$ such that all $a\in\cA$ are integrable and by $\fM_\sA(s)$ or $\fM_\sA(L)$ we denote all representing measures of the moment sequence $s$ resp.\ moment functional $L$. Even though moment sequences and moment functionals are the same, when we apply techniques from algebraic geometry it is easier to work with moment functionals $L:\cA\rightarrow\rset$ on e.g.\ $\cA = \rset[x_1,\dots,x_n]_{\leq 2d}$ or $\rset[\cX]_{\leq 2d}$ while when we work with Hankel matrices it is easier to work with moment sequences $s$ in a fixed basis $\sA$ of $\cA$. Since the polynomials $\rset[x_1,\dots,x_n]_{\leq 2d}$ are of special importance, we denote by
\[\sA_{n,d}: \{x^\alpha \,|\,\alpha\in\nset_0^n \und |\alpha|=\alpha_1 + \dots + \alpha_m \leq d\}\]
the monomial basis, where we have $x^\alpha = x_1^{\alpha_1}\cdots x_n^{\alpha_n}$ with $\alpha=(\alpha_1,\dots,\alpha_n)\in\nset_0^n$. On $\nset_0^n$ we work with the partial order $\alpha=(\alpha_1,\dots,\alpha_n) \leq \beta = (\beta_1,\dots,\beta_n)$ if $\alpha_i \leq \beta_i$ for all $i=1,\dots,n$.

\begin{dfn}
Let $\sA = \{a_1,\dots,a_m\}$ be a basis of the finite dimensional vector space $\cA$ of measurable functions on the measurable space $(\cX,\fA)$. We define $s_\sA$ by
\[s_\sA: \cX\rightarrow\rset^m,\quad x\mapsto s_\sA(x):= \begin{pmatrix}a_1(x)\\ \vdots\\ a_m(x)\end{pmatrix}.\]
\end{dfn}

Of course, $s_\sA(x)$ is the moment sequence of the Dirac $\delta_x$ measure and the corresponding moment functional is the point evaluation $l_x$ with $l_x(a) := a(x)$. By a measure we always mean a positive measure unless it is explicitly denoted as a signed measure.

The fundamental theorem in the theory of truncated moments is the following.

\begin{thm}[Richter Theorem {\cite[Satz 11]{richte57}}]\label{thm:richter}
Let $\sA = \{a_1,\dots,a_m\}$, $m\in\nset$, be finitely many measurable functions on a measurable space $(\cX,\fA)$. Then every moment sequence $s\in\cS_\sA$ has a $k$-atomic representing measure
\[s = \sum_{i=1}^k c_i\cdot s_\sA(x_i)\]
with $k\leq m$, $c_1,\dots,c_k>0$, and $x_1,\dots,x_k\in\cX$.
\end{thm}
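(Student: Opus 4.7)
The plan is to apply Carath\'eodory's theorem for convex cones in the finite-dimensional ambient space $\rset^m$. Fix $s\in\cS_\sA$ with representing measure $\mu\in\fM_\sA(s)$. Componentwise we have $s_i=\int_\cX a_i(x)\,\diff\mu(x)$, so the vector $s$ lies in the closed convex cone generated by the ``moment curve'' $T:=\{s_\sA(x):x\in\cX\}\subseteq\rset^m$. My goal is to replace this integral representation of $s$ by a finite conic combination of at most $m$ elements of $T$.

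The workhorse will be the classical Carath\'eodory reduction: from any finite representation $s=\sum_{i=1}^k c_i\cdot s_\sA(x_i)$ with $k>m$ and $c_i>0$ one can eliminate an atom. Indeed, $k$ vectors in $\rset^m$ are linearly dependent whenever $k>m$, so there exist $\lambda_1,\dots,\lambda_k$, not all zero, with $\sum_i \lambda_i\cdot s_\sA(x_i)=0$, and hence $s=\sum_i (c_i-t\lambda_i)\cdot s_\sA(x_i)$ for every $t\in\rset$. Picking $t$ so that some $c_i-t\lambda_i$ hits zero while the others remain non-negative (switching the sign of $\lambda$ if necessary) kills at least one atom; iterating yields a representation with $k\leq m$.

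The main obstacle is to bootstrap the reduction, i.e.\ to produce \emph{some} finite atomic representation of $s$ from the given measure $\mu$. The first approach I would try is an approximation scheme: approximate each $a_i$ by simple functions to produce discrete moment sequences $s_n\to s$ coming from purely atomic measures $\mu_n$, apply the Carath\'eodory reduction to trim each $s_n$ to at most $m$ atoms, and attempt to pass to the limit. Since no topology on $\cX$ is assumed this limiting step is genuinely delicate: a limit of $m$-atomic conic combinations need not itself be $m$-atomic without some compactness on $\cX$. I would therefore expect Richter's actual argument to sidestep the approximation and work directly, either by induction on $m$ (the case $m=1$ being immediate from the sign analysis of $\int a_1\,\diff\mu$) or via an extreme-point argument inside the convex set $\fM_\sA(s)$, where the finite-dimensional moment constraint forces any extremal representing measure to be supported on at most $m$ atoms. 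Either route reduces the abstract measurable setup to the purely linear-algebraic Carath\'eodory reduction described above.
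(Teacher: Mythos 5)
First, a point of comparison: the paper does not prove \Cref{thm:richter} at all --- it is imported verbatim from Richter's article \cite[Satz 11]{richte57} and used as a black box --- so your proposal cannot be matched against an in-paper argument, only checked on its own terms.

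On its own terms it has a genuine gap, and you correctly locate it yourself: everything hinges on first producing \emph{some} finite atomic representation of $s$, and that step is never carried out. The Carath\'eodory trimming of a finite conic combination from $k>m$ down to $k\leq m$ atoms is correct and standard, but it is the easy half. Knowing only that $s=\int_\cX s_\sA(x)\,\diff\mu(x)$ places $s$ in the \emph{closed} conic hull of $T:=s_\sA(\cX)$ (integrate any linear functional that is nonnegative on $T$), whereas the theorem asserts membership in the conic hull itself; a limit of $m$-atomic representations need not be atomic without topological hypotheses on $\cX$, exactly as you observe. The extreme-point route is not free either: $\fM_\sA(s)$ is a convex set of measures with no ambient compactness, so Krein--Milman does not supply extreme points, and the Douglas-type description of extreme points under finitely many moment constraints is itself a theorem requiring proof. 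The way to close the gap is the one you half-name, induction --- but on $\dim\lin s_\sA(\cX)$ rather than on $m$. Let $C=\mathrm{cone}(T)$ and $V=\lin T$; integrating functionals that vanish on $T$ shows $s\in V$, and integrating functionals nonnegative on $T$ shows $s\in\overline{C}$. If $s$ lies in the relative interior of $C$ inside $V$, then $s\in C$ and your trimming argument finishes. Otherwise $s$ lies on the relative boundary of $\overline{C}$, so there is a nonzero linear functional $\ell$ on $V$ with $\ell\geq0$ on $T$ and $\ell(s)=0$; then $\int_\cX\ell(s_\sA(x))\,\diff\mu(x)=0$ with nonnegative integrand forces $\ell\circ s_\sA=0$ $\mu$-almost everywhere, so $\mu$ is concentrated on the measurable set $\cX_0=\{x\in\cX\,:\,\ell(s_\sA(x))=0\}$, on which $s_\sA(\cX_0)$ spans a proper subspace of $V$, and the induction hypothesis applies. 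Without this (or an equivalent) argument, your proposal states the theorem's main difficulty rather than resolving it.
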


The theorem can also be called \emph{Richter--Rogosinski--Rosenbloom Theorem} \cite{richte57,rogosi58,rosenb52}, see the discussion after Example 20 in \cite{didioConeArXiv} for more details. That every truncated moment sequence has a $k$-atomic representing measure ensures that the Carath\'eodory number $\cat_\sA$ is well-defined.

\begin{dfn}\label{dfn:caraNumber}
Let $\sA = \{a_1,\dots,a_m\}$ be linearly independent measurable functions on a measurable space $(\cX,\fA)$. For $s\in\cS_\sA$ we define the \emph{Carath\'eodory number $\cat_\sA(s)$ of $s$} by
\[\cat_\sA(s) := \min \{k\in\nset_0 \,|\, \exists\mu\in\fM_\sA(s)\ k\text{-atomic}\}.\]
We define the \emph{Carath\'eodory number $\cat_\sA$ of $\cS_\sA$} by
\[\cat_\sA := \max_{s\in\cS_\sA} \cat_\sA(s).\]The same definition holds for moment functionals $L:\cA\rightarrow\rset$.
\end{dfn}

The following theorem turns out to be a convenient tool for proving lower bounds on the Carath\'eodory number $\cat_\sA$.

\begin{thm}[{\cite[Thm.\ 18]{didio17Cara}}]\label{thm:caraLowerZeroSet}
Let $\sA = \{a_1,\dots,a_m\}$ be measurable functions on a measurable space $(\cX,\fA)$, $s\in\cS_\sA$, and $a\in\cA$ with $a\geq 0$ on $\cX$, $\cZ(a) = \{x_1,\dots,x_k\}$ and $L_s(a) = 0$. Then
\[\cat_\sA \quad\geq\quad \cat_\sA(s) \quad=\quad \dim\lin\{s_\sA(x_i) \,|\, i=1,\dots,k\}.\]
\end{thm}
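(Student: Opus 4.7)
The left inequality $\cat_\sA\geq\cat_\sA(s)$ is immediate from \Cref{dfn:caraNumber}, so the real content is the equality $\cat_\sA(s)=d$, where $d:=\dim\lin\{s_\sA(x_i)\mid i=1,\dots,k\}$. The approach is two-step: first localize the supports of all representing measures of $s$ onto the finite set $\cZ(a)=\{x_1,\dots,x_k\}$, and then match $\cat_\sA(s)$ with $d$ by separate upper and lower bounds.

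The localization step is short. For any $\mu\in\fM_\sA(s)$, the hypothesis gives
\[
0 \;=\; L_s(a) \;=\; \int_\cX a(x)\,\diff\mu(x),
\]
and $a\geq 0$ on $\cX$ forces $a=0$ $\mu$-almost everywhere, hence $\supp\mu\subseteq\cZ(a)=\{x_1,\dots,x_k\}$. Combined with Richter's theorem (\Cref{thm:richter}), which guarantees a finitely atomic representing measure, every such measure takes the form $\mu=\sum_{i=1}^{k}c_i\delta_{x_i}$ with $c_i\geq 0$. In particular $s=\sum_i c_i\,s_\sA(x_i)$ lies in the convex cone $C$ spanned by $s_\sA(x_1),\dots,s_\sA(x_k)$ and in the $d$-dimensional subspace $V:=\lin\{s_\sA(x_i)\mid i=1,\dots,k\}$.

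For the upper bound $\cat_\sA(s)\leq d$ I would invoke the classical Carath\'eodory reduction for conic combinations inside $V$: whenever strictly more than $d$ of the $c_i$ are positive, the corresponding generators $\{s_\sA(x_i):c_i>0\}$ are linearly dependent in $V$, and the standard one-parameter deformation $c_i\mapsto c_i-t\lambda_i$ along a nontrivial relation $\sum\lambda_i s_\sA(x_i)=0$, with $t>0$ chosen maximally subject to $c_i-t\lambda_i\geq 0$, zeroes out at least one coefficient while preserving $s$. Iterating brings us down to at most $d$ atoms.

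The matching lower bound $\cat_\sA(s)\geq d$ is the step I expect to be the main obstacle. Any $k'$-atomic representing measure $\mu=\sum_{j=1}^{k'}c'_j\delta_{y_j}$ with $y_j\in\cZ(a)$ and $c'_j>0$ immediately gives only $k'\geq\dim\lin\{s_\sA(y_j)\mid j=1,\dots,k'\}$, a priori strictly less than $d$ if $s$ happens to sit on a proper face of $C$. The role of the full zero set $\cZ(a)$ in the hypothesis is to place $s$ in the relative interior of $C$, so that every representation must employ generators whose span is all of $V$; pinning this interiority down, either directly from the structure of $a$ as a positive witness for the face or via a separating-hyperplane argument on $C$, is the crux. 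Once secured, $\lin\{s_\sA(y_j)\}=V$ for every representation and $k'\geq\dim V=d$ follows, completing the proof.
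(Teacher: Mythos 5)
The paper never proves this statement --- it is imported verbatim from \cite[Thm.~18]{didio17Cara} --- so there is no in-house argument to compare yours against; I assess the proposal on its own. Your localization step and your upper bound are correct and complete: $0=L_s(a)=\int_\cX a\,\diff\mu$ with $a\geq0$ forces $\supp\mu\subseteq\cZ(a)$ for every representing measure, and conic Carath\'eodory reduction inside $V:=\lin\{s_\sA(x_i)\}$ gives $\cat_\sA(s)\leq d:=\dim V$. The inequality $\cat_\sA\geq\cat_\sA(s)$ is indeed immediate from \Cref{dfn:caraNumber}.

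The lower bound, which you rightly flag as the crux, is a genuine gap, and the repair you sketch would not close it. First, the hypotheses do \emph{not} place $s$ in the relative interior of the cone $C$ generated by the $s_\sA(x_i)$: the sequence $s=s_\sA(x_1)$ satisfies every hypothesis (it is a moment sequence and $L_s(a)=a(x_1)=0$), yet $\cat_\sA(s)\leq 1$, which is smaller than $d$ whenever $d\geq 2$. Second, even relative interiority would not give you what you want: for $v_1=(1,0,1)$, $v_2=(-1,0,1)$, $v_3=(0,1,1)$, $v_4=(0,-1,1)$ the point $s=v_1+v_2+v_3+v_4=(0,0,4)$ lies in the interior of the three-dimensional cone they generate but equals $2v_1+2v_2$, a two-atomic representation whose generators span only a plane; such a configuration is realizable as a moment problem satisfying all the stated hypotheses. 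The condition that actually forces every representation to use at least $d$ atoms is strictly stronger than interiority: $s$ must lie outside every proper linear subspace of $V$ spanned by a subset of $\{s_\sA(x_1),\dots,s_\sA(x_k)\}$. Since there are only finitely many such subspaces and each pulls back to a proper subspace of the weight space $\rset^k$, a \emph{generic} choice of positive weights $c_i$ yields $s=\sum_i c_i\,s_\sA(x_i)$ with $\cat_\sA(s)=d$ --- and this existential form is all the paper ever uses (\Cref{thm:maincara}, \Cref{thm:lowerboundsRn}, \Cref{prop:momentOnBoundary} each invoke the result only to exhibit \emph{one} functional with large Carath\'eodory number). As literally reproduced here, with equality asserted for \emph{every} $s$ satisfying $L_s(a)=0$, the statement admits counterexamples of the above kind, so a blind proof was bound to stall exactly where yours did; the fix is to add a genericity hypothesis on $s$ (or to prove and use the existential version).
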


\begin{rem}
Note that in \Cref{thm:caraLowerZeroSet} it is crucial that the zero set of $a$ is finite: Take $a=0$ and $\cX=\rset^n$ for a simple example where the statement fails when the zero set is not finite. 
\end{rem}

It is well-known that in general not every sequence $s\in\rset^m$ or linear functional $L:\cA\rightarrow\rset$ has a positive representing measure. But of course it always has a signed $k$-atomic representing measure with $k\leq m$.

\begin{lem}[{\cite[Prop.\ 12]{didioConeArXiv}}]\label{lem:signedMeasures}
Let $\sA = \{a_1,\dots,a_m\}$ be a basis of the finite dimensional space $\cA$ of measurable functions on a measurable space $(\cX,\fA)$. There exist points $x_1,\dots,x_m\in\cX$ such that every vector $s\in\rset^m$ has a signed $k$-atomic representing measure $\mu$ with $k\leq m$ and all atoms are from $\{x_1,\dots,x_m\}$,  i.e., every functional $L:\cA\rightarrow\rset$ is the linear combination $L = c_1 l_{x_1} + \cdots + c_m l_{x_m}$, $c_i\in\rset$.
\end{lem}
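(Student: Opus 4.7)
The plan is to reduce everything to a single linear-algebra fact: the evaluation map $s_\sA\colon\cX\to\rset^m$ has image whose linear span is all of $\rset^m$. Once this is proved, $m$ suitable points can be selected and the claimed decomposition follows by rewriting vectors in the basis they span.

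First I would verify spanning. Let $V := \lin\{s_\sA(x) \,|\, x\in\cX\}\subseteq\rset^m$. If $V$ were a proper subspace, there would exist a nonzero vector $c=(c_1,\dots,c_m)\in\rset^m$ with $c\perp V$, i.e.\ $\sum_{i=1}^m c_i\,a_i(x) = c^\top s_\sA(x) = 0$ for every $x\in\cX$. But this says that the nonzero linear combination $\sum_i c_i a_i\in\cA$ vanishes identically on $\cX$, contradicting the assumption that $\sA=\{a_1,\dots,a_m\}$ is a basis of $\cA$ (in particular, linearly independent as functions on $\cX$). Hence $V=\rset^m$.

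Next I would extract the $m$ points. Because $V=\rset^m$, I can inductively pick $x_1,\dots,x_m\in\cX$ such that $s_\sA(x_1),\dots,s_\sA(x_m)$ are linearly independent: having chosen $x_1,\dots,x_j$ spanning a proper subspace $W_j\subsetneq\rset^m$, spanning forces the existence of some $x_{j+1}\in\cX$ with $s_\sA(x_{j+1})\notin W_j$, for otherwise $V\subseteq W_j$. This produces a basis $s_\sA(x_1),\dots,s_\sA(x_m)$ of $\rset^m$.

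Finally, given any $s\in\rset^m$, expanding in this basis yields unique coefficients $c_1,\dots,c_m\in\rset$ with $s=\sum_{i=1}^m c_i\,s_\sA(x_i)$, which says exactly that the signed atomic measure $\mu = \sum_{i=1}^m c_i\,\delta_{x_i}$ represents $s$; equivalently, $L_s = \sum_{i=1}^m c_i\,l_{x_i}$. Dropping those indices $i$ with $c_i=0$ gives a $k$-atomic signed representation with $k\leq m$ and all atoms from $\{x_1,\dots,x_m\}$. There is no real obstacle here — the statement is essentially the observation that a basis of functions separates points strongly enough to produce a dual basis via point evaluations; the only thing to watch is that the same $m$ points $x_1,\dots,x_m$ work for every $s\in\rset^m$, which is guaranteed by choosing them once and for all to make $s_\sA(x_1),\dots,s_\sA(x_m)$ a basis of $\rset^m$.
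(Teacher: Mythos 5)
Your argument is correct: the key point that $\{s_\sA(x)\,|\,x\in\cX\}$ spans $\rset^m$ (since a nonzero annihilating vector would give a nontrivial vanishing linear combination of the basis $a_1,\dots,a_m$), followed by a greedy selection of $m$ points whose evaluation vectors form a basis, is exactly the standard proof of this fact. The paper itself gives no proof but imports the statement from \cite[Prop.~12]{didioConeArXiv}, where the same dual-basis argument is used, so there is nothing to add.
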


It is well-known that in dimension $n=1$ the atom positions $x_i$ of a moment sequence can be calculated from the generalized eigenvalue problem, see e.g.\ \cite{golub99}.
To formulate this and other results we introduce the following shift.

\begin{dfn}\label{dfn:multipl}
Let $n,d\in\nset$ and $s = (s_\alpha)_{\alpha\in\nset_0^n:|\alpha|\leq d}$. For $\beta\in\nset_0^n$ with $|\beta|\leq d$ we define $M_\beta s := (M_\beta s_\alpha)_{\alpha\in\nset_0^n:|\alpha+\beta|\leq d}$ by $M_\beta s_\alpha := s_{\alpha+\beta}$, i.e., $(M_\beta L)(p) = L(x^\beta\cdot p)$.
\end{dfn}

For a space $\cA$ of measurable functions with basis $\sA = \{a_1,a_2\dots\}$ the \emph{Hankel matrix} $\cH_d(L)$ of a linear functional $L:\cA^2\rightarrow\rset$ is given by $\cH_d(L) = (L(a_i a_j))_{i,j=1}^d$. The atom positions of a truncated moment sequence $s$ (resp.\ moment functional $L$) are then determined by the following result from a generalized eigenvalue problem.

\begin{lem}\label{lem:genEigenvalueOneDim}
Let $n,d\in\nset$, $\cX = \cset$, and $s = (s_0,s_1,\dots,s_{2d+1})\in\rset^{2d+2}$ with
\[s = \sum_{i=1}^k c_i\cdot s_{\sA_{1,2d+1}}(z_i)\]
for some $z_i\in\cset$, $c_i\in\cset$, and $k\leq d$. Then the $z_i$ are unique and are the eigenvalues of the generalized eigenvalue problem
\begin{equation}\label{eq:genEigValProb}
\cH_{d}(M_1 s)v_i = z_i \cH_{d}(s)v_i.
\end{equation}
\end{lem}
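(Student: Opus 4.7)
The natural approach is to exploit the factored form of the Hankel matrices. First I would unfold the hypothesis entrywise: since $s_{\sA_{1,2d+1}}(z_i) = (1, z_i, z_i^2, \dots, z_i^{2d+1})^T$, the assumption reads $s_m = \sum_{i=1}^k c_i z_i^m$ for $m = 0, 1, \dots, 2d+1$. After merging equal atoms and discarding zero coefficients (which only lowers $k$), I may assume the $z_i$ are pairwise distinct and the $c_i$ nonzero. Introducing the rectangular Vandermonde matrix $V = (z_i^j)_{j=0,\dots,d;\, i=1,\dots,k} \in \cset^{(d+1)\times k}$ together with $D = \diag(c_1, \dots, c_k)$ and $\Lambda = \diag(z_1, \dots, z_k)$, a direct entrywise check yields
$$\cH_d(s) \;=\; V D V^T \qquad \text{and} \qquad \cH_d(M_1 s) \;=\; V D \Lambda V^T.$$
Because $k \leq d < d+1$ and the $z_i$ are distinct, $V$ has full column rank $k$; hence $V^T : \cset^{d+1} \to \cset^k$ is surjective and $D$ is invertible.

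Next, for each $i$ I would pick any preimage $v_i \in \cset^{d+1}$ with $V^T v_i = e_i$. Plugging into the shifted Hankel factorization gives
$$\cH_d(M_1 s)\, v_i \;=\; V D \Lambda\, e_i \;=\; z_i\, V D\, e_i \;=\; z_i\, V D V^T v_i \;=\; z_i\, \cH_d(s)\, v_i,$$
which is exactly the claimed eigenvalue identity. Conversely, if $v \in \cset^{d+1}$ lies outside $\ker V^T$ and satisfies $\cH_d(M_1 s) v = z \cH_d(s) v$, then setting $u := V^T v \neq 0$ the factorization yields $VD(\Lambda - z I)u = 0$; since $VD$ has full column rank, $(\Lambda - z I)u = 0$, forcing $z \in \{z_1,\dots,z_k\}$. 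This gives both the existence of the $v_i$ and a characterization of the admissible eigenvalues.

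For the uniqueness assertion I would argue directly via Vandermonde. Assume $s$ has a second representation $s = \sum_{j=1}^{k'} c'_j\, s_{\sA_{1,2d+1}}(z'_j)$ with pairwise distinct $z'_j$, nonzero $c'_j$, and $k' \leq d$. Subtracting the two produces an identity $\sum_{\ell=1}^N b_\ell w_\ell^m = 0$ for all $m = 0, 1, \dots, 2d+1$, where $\{w_1, \dots, w_N\}$ is the union of the two sets of atoms and $N \leq k + k' \leq 2d < 2d+2$. The first $N$ of these equations form a square Vandermonde system in the distinct values $w_\ell$, so all $b_\ell = 0$; comparing coefficients of the two representations shows they coincide.

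The main obstacle is that when $k < d+1$ both Hankel matrices have rank $k$, so the pencil $(\cH_d(M_1 s), \cH_d(s))$ is strictly singular; in particular $\det(\cH_d(M_1 s) - z\, \cH_d(s)) \equiv 0$, and every scalar $z \in \cset$ is formally an eigenvalue via vectors from the common kernel $\ker V^T = \ker \cH_d(s) \cap \ker \cH_d(M_1 s)$. The statement must therefore be read as an eigenpair assertion, and the only delicate point is to separate these spurious eigenvalues of the singular pencil from the genuine $z_i$ arising from eigenvectors transverse to $\ker V^T$—which the factorization $VDV^T$ makes transparent.
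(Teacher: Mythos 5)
Your proposal is correct and follows essentially the same route as the paper: the paper's entire proof consists of writing down the two factorizations $\cH_d(s)=VDV^T$ and $\cH_d(M_1 s)=VD\Lambda V^T$ and asserting that the eigenvalue claim and uniqueness follow. Your version supplies the details the paper omits (full column rank of the Vandermonde factor, explicit construction of the eigenvectors $v_i$, the converse direction, and the Vandermonde argument for uniqueness), and your remark that for $k<d+1$ the pencil is singular on the common kernel $\ker V^T$ — so the statement must be read as concerning eigenpairs transverse to that kernel — is a legitimate precision that the paper's terse proof glosses over.
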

\begin{proof}
That the $z_i$ are the eigenvalues of (\ref{eq:genEigValProb}) and therefore uniqueness follows from
\[\cH_{d}(s) = (s_{\sA_{1,d}}(z_1),\dots,s_{\sA_{1,d}}(z_k))\cdot\diag(c_1,\dots,c_k)\cdot (s_{\sA_{1,d}}(z_1),\dots,s_{\sA_{1,d}}(z_k))^T\]
and
\begin{multline*}
\cH_{d}(M_1 s) =\\ (s_{\sA_{1,d}}(z_1),\dots,s_{\sA_{1,d}}(z_k))\cdot\diag(c_1 z_1,\dots,c_k z_k)\cdot (s_{\sA_{1,d}}(z_1),\dots,s_{\sA_{1,d}}(z_k))^T.\qedhere
\end{multline*}
\end{proof}

We gave here only the $1$-dimensional formulation, but a similar result holds also for $n>1$. But as seen from the Carath\'eodory number and the flat extension in \Cref{sec:cara} and \Cref{sec:flat}, the size of the Hankel matrix of the flat extension can be very large. For numerical reasons it is therefore advisable to reduce $n$-dimensional problems to $1$-dimensional problems.

\subsection{Algebraic Geometry}
Consider the polynomial ring $\rset[x_0,\ldots,x_n]$ with the natural grading and let $I\subset\rset[x_0,\ldots,x_n]$ be a homogeneous ideal. Let
\[R=\rset[x_0,\ldots,x_n]/I\]
be the quotient ring which is a graded ring itself. Recall that the \emph{Hilbert function} of $R$ is given by $HF_R(d)=\dim R_d$ where $R_d$ is the degree $d$ part of $R$. For $d$ large enough one has $HF_R(d)=HP_R(d)$ for some polynomial $HP_R$ of degree $k$ which is called the \textit{Hilbert polynomial} of $R$. 

In this article, we will always denote by $\pset^n=\pset_{\cset}^n$ the complex projective space. A \emph{real projective variety} is the zero set $V\subset\pset^n$ of some homogeneous ideal $I\subset\rset[x_0,\ldots,x_n]$. In particular, a real projective variety can contain nonreal points but it is defined by real polynomial equations. We will denote by $V(\rset)$ the set of real points of $V$. The Zariski closure of any subset $W\subset\pset^n$, that consists only of real points, is an example for a real projective variety $V$ with the additional property that $V(\rset)$ is Zariski in $V$.
If $V\subset\pset^n$ is a real projective variety and $I$ is its homogeneous vanishing ideal, then the Hilbert function/polynomial $HF_V$ resp. $HP_V$ of $V$ is the Hilbert function/polynomial of $\rset[x_0,\ldots,x_n]/I$. In this case, the leading coefficient of $HP_V$ is $\frac{e}{k!}$ where $e$ is the degree of $V$.

Now we consider the dehomogenization map
\[\rset[x_0,\ldots,x_n]\to\rset[x_1,\ldots,x_n],\quad f\mapsto f|_{x_0=1}.\]
Let $I\subset\rset[x_1,\ldots,x_n]$ be an ideal and $I^h\subset\rset[x_0,\ldots,x_n]$ the \emph{homogenization} of $I$, i.e., the ideal generated by the homogenizations $f^h$ of all $f\in I$. Then the dehomogenization map induces an isomorphism of vector spaces
\[(\rset[x_0,\ldots,x_n]/I^h)_d \quad\to\quad (\rset[x_1,\ldots,x_n]/I)_{\leq d}\]
for all $d\geq0$. Here $(\rset[x_1,\ldots,x_n]/I)_{\leq d}$ is the subspace of $\rset[x_1,\ldots,x_n]/I$ consisting of the residue classes of polynomials of degree at most $d$. The main application of this observation will be the case when $I$ is the vanishing ideal of finitely many points $\Gamma$ in $\rset^n$. In this case the dimension
\[\dim\lin \{s_{\sA_{n,d}}(x) \,|\, x\in\Gamma\}\]
of the span of the point evaluations $s_{\sA_{n,d}}(x)$ in $\rset[x_1,\ldots,x_n]_{\leq d}^*$ at the points from $\Gamma$ {needed \Cref{thm:caraLowerZeroSet}} is
\[\dim (\rset[x_1,\ldots,x_n]/I)_{\leq d} \quad=\quad \dim (\rset[x_0,\ldots,x_n]/I^h)_d \quad=\quad HF_I(d).\]
The Hilbert function $HF_I$ of an ideal $I$ can be easily calculated if it is generated by a regular sequence.

\begin{dfn}
Let $A$ be a commutative ring. A sequence $f_1,\ldots,f_r\in A$ is a \emph{regular sequence} if for all $i=1,\ldots,r$ the residue class of $f_i$ is not a zero divisor in $A/(f_1,\ldots,f_{i-1})$.
\end{dfn}

The following is a consequence of Krull's Principal Ideal Theorem. We include a proof since we are not aware of a good reference.

\begin{lem}\label{lem:krull}
Let $I\subset\rset[x_0,\ldots,x_n]$ be a homogeneous radical ideal and $V\subset\pset^n$ its zero set. If each irreducible component of $V$ has the same dimension $d\geq1$, then for any homogeneous $f\in\rset[x_0,\ldots,x_n]$ the following are equivalent:
\begin{enumerate}[i)]
\item $f$ is not a zero divisor in $\rset[x_0,\ldots,x_n]/I$.
\item $f$ is not in a minimal prime ideal of $\rset[x_0,\ldots,x_n]/I$.
\item $f$ is not identically zero on an irreducible component of $V$.
\item Each irreducible component of $V\cap\cV(f)$ has dimension at most $d-1$.
\item Each irreducible component of $V\cap\cV(f)$ has dimension $d-1$.
\end{enumerate}
Furthermore, if $f$ is not constant and $V$ nonempty, then $V\cap\cV(f)$ is nonempty.
\end{lem}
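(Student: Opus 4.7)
The plan is to establish the chain of equivalences (i)--(v) by splitting it into three independent blocks, corresponding to three different classical inputs: (i)$\Leftrightarrow$(ii) from the structure of zero divisors in a reduced Noetherian ring, (ii)$\Leftrightarrow$(iii) from a Galois descent between $\rset$ and $\cset$, and (iii)$\Leftrightarrow$(iv)$\Leftrightarrow$(v) from Krull's Hauptidealsatz applied componentwise. The final nonemptiness claim will follow from a standard projective dimension argument.

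For the first block I would use that radicality of $I$ makes $R=\rset[x_0,\ldots,x_n]/I$ reduced, and that in any reduced Noetherian ring the set of zero divisors coincides with the union of the minimal primes (every associated prime of $(0)$ is minimal). For the second block I would pass to the complexification $I_\cset:=I\cdot\cset[x_0,\ldots,x_n]$, whose minimal primes are exactly the homogeneous vanishing ideals of the complex irreducible components of $V$. Contracting to $\rset[x_0,\ldots,x_n]$ produces the minimal primes of $I$, with two complex primes yielding the same real prime iff they form a complex-conjugate pair. Since $f$ has real coefficients, $f\in\mathfrak{p}\cap\rset[x_0,\ldots,x_n]$ iff $f\in\mathfrak{p}$, so $f$ lies in a minimal prime of $I$ iff it lies in a minimal prime of $I_\cset$ iff it vanishes identically on some irreducible component of $V$.

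For the third block, (v)$\Rightarrow$(iv) is trivial, and the contrapositive of (iv)$\Rightarrow$(iii) is immediate: if $f$ vanishes on an irreducible component $W$ of $V$ then $W\subset V\cap\cV(f)$, and any irreducible component of $V\cap\cV(f)$ containing $W$ must have dimension $\geq d$. The heart of the lemma is (iii)$\Rightarrow$(v), which I would prove componentwise. For each irreducible component $W$ of $V$ with homogeneous vanishing ideal $\mathfrak{p}_W\subset\cset[x_0,\ldots,x_n]$, assumption (iii) together with the previous step says that the class of $f$ in the integral domain $\cset[x_0,\ldots,x_n]/\mathfrak{p}_W$ is nonzero, hence a non-zero-divisor. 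Krull's Hauptidealsatz then forces every minimal prime over $\mathfrak{p}_W+(f)$ to have height one, so every irreducible component of $W\cap\cV(f)$ has dimension exactly $d-1$. Taking the union over all $W$ yields (v).

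For the final assertion, pick any irreducible component $W\subset V$; it has dimension $d\geq 1$. If $f$ vanishes on $W$, then $W\subset V\cap\cV(f)$ is already nonempty. Otherwise, the projective dimension theorem (a nonconstant homogeneous polynomial cuts out a nonempty hypersurface inside every positive-dimensional projective variety) combined with the component-wise Krull argument shows that $W\cap\cV(f)$ is nonempty and pure of dimension $d-1\geq 0$. I expect the main obstacle to be the descent step (ii)$\Leftrightarrow$(iii): one has to verify carefully that complex conjugation permutes the minimal primes of $I_\cset$, and that a real polynomial sits in a complex minimal prime iff it sits in its conjugate, so that no information is lost upon contracting to $\rset[x_0,\ldots,x_n]$. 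Once this dictionary between real primes and complex components is in place, the remaining implications reduce to standard algebraic geometry.
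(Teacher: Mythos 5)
Your proof is correct and follows essentially the same route as the paper: zero divisors in the reduced Noetherian ring $\rset[x_0,\ldots,x_n]/I$ are the union of the minimal primes, Krull's Hauptidealsatz yields that every component of $V\cap\cV(f)$ has codimension one (hence dimension $d-1$ by purity), and the projective dimension theorem gives the final nonemptiness claim. The only substantive difference is your explicit Galois descent for (ii)$\Leftrightarrow$(iii): the paper simply asserts that minimal primes of $\rset[x_0,\ldots,x_n]/I$ are the vanishing ideals of the irreducible components of $V\subset\pset^n$, whereas you correctly observe that a conjugate pair of complex components contracts to a single real minimal prime --- a refinement that is harmless here precisely because $f$ has real coefficients, as you note.
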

\begin{proof}
The minimal prime ideals of the homogeneous coordinate ring $$A=\rset[x_0,\ldots,x_n]/I$$ of $V$ are exactly the vanishing ideals of irreducible components of $V$. Thus we have $(ii)\Leftrightarrow(iii)$. If $f$ is a zero divisor in $A$, then there is a nonzero $g\in A$ such that $fg=0$. Let $V_i$ be an irreducible component of $V$ on which $g$ does not vanish identically. Then $V_i\subset\cV(f)\cup\cV(g)$ implies $V_i\subset\cV(f)$ because $V_i$ is irreducible. Thus $(iii)$ implies $(i)$.  By \cite[p.~44, Ex.~9]{atiyahIntroComAlg} every minimal prime ideal contains only zero divisors. This shows $(i)\Rightarrow(ii)$. If $f$ vanishes entirely on an irreducible component $V_i$ of $V$, then $V_i$ is an irreducible component of $V\cap\cV(f)$. By assumption we have $\dim(V_i)=d$, so we cannot have $(iv)$. Thus $(iv)\Rightarrow(iii)$.
 
Since $(v)$ clearly implies $(iv)$, it remains to show $(v)$ under the assumption of $(i)-(iii)$. If $f$ is a unit in $A$, then $V\cap\cV(f)=\emptyset$ and $(v)$ is trivially true as there are no irreducible components. Thus we can assume that $f$ is neither a zero divisor nor a unit in $A$. Thus by Krull's Principal Ideal Theorem \cite[Cor.~11.17]{atiyahIntroComAlg} every minimal prime ideal over $(f)\subset A$ has height one. This implies that every irreducible component $W$ of $V\cap\cV(f)$ has codimension one. Since $V$ is of pure dimension $d$, this means that the dimension of $W$ is $d-1$. The additional statement follows for example by \cite[Cor.~1.7]{shafa77} because we have $\dim(V)>0$.
\end{proof}

\begin{cor}\label{cor:regseq}
Let $I_0\subset\rset[x_0,\ldots,x_n]$ be a homogeneous prime ideal such that $\dim\cV(I_0)\geq k$. Let $f_1,\ldots,f_k\in\rset[x_0,\ldots,x_n]$ homogeneous elements of positive degree such that for all $i=1,\ldots,k$ we have:
\begin{enumerate}[i)]
\item $I_i:=I+(f_1,\ldots,f_i)$ is radical.
\item $\dim\cV(I_i)=\dim\cV(I_{i-1})-1$.
\end{enumerate}
Then $f_1,\ldots,f_k$ is a regular sequence modulo $I_0$.
\end{cor}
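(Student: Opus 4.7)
The plan is a straightforward induction on $i$, applying Lemma \ref{lem:krull} at each step. Write $d:=\dim\cV(I_0)\geq k$. The inductive invariant I will carry is that, for each $i\in\{0,1,\ldots,k\}$, the ideal $I_i$ is radical and $\cV(I_i)$ is equidimensional of dimension $d-i$. The base case $i=0$ is immediate: $I_0$ is prime, so $\cV(I_0)$ is irreducible and in particular equidimensional of dimension $d$.

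For the inductive step, assume the invariant holds at $i-1$ for some $1\leq i\leq k$. Then $\cV(I_{i-1})$ is equidimensional of dimension $d-(i-1)\geq d-k+1\geq 1$, the ideal $I_{i-1}$ is radical, and $f_i$ is homogeneous of positive degree — in particular neither $0$ nor a unit in $\rset[x_0,\ldots,x_n]/I_{i-1}$. Hypothesis (ii) gives $\dim\cV(I_i)=d-i$, and since $\cV(I_i)=\cV(I_{i-1})\cap\cV(f_i)$, every irreducible component of $\cV(I_i)$ has dimension at most $(d-(i-1))-1$. This is precisely condition (iv) of Lemma \ref{lem:krull} applied to $I_{i-1}$ and $f_i$. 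By the implication $(iv)\Rightarrow(i)$ of that lemma, $f_i$ is not a zero divisor in $\rset[x_0,\ldots,x_n]/I_{i-1}$, which is the required condition at step $i$ of the definition of a regular sequence.

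To close the induction, I still need $\cV(I_i)$ to be equidimensional of dimension $d-i$; this I obtain from the equivalence $(iv)\Leftrightarrow(v)$ of Lemma \ref{lem:krull}, which upgrades the ``at most $d-i$'' on each component to ``exactly $d-i$''. Radicality of $I_i$ is given by hypothesis (i), so the invariant is re-established and the induction continues.

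The argument is essentially bookkeeping on top of Lemma \ref{lem:krull}, and I do not anticipate any real obstacle. The only point deserving care is that I must invoke the equivalence $(iv)\Leftrightarrow(v)$, and not merely $(iv)\Rightarrow(i)$, in order to keep pure-dimensionality alive across the induction; without that, one could not reapply the lemma at the next step.
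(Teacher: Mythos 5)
Your proposal is correct and follows essentially the same route as the paper: the same induction carrying equidimensionality, the same use of Lemma \ref{lem:krull} via $(iv)\Rightarrow(v)$ to preserve pure dimension and then to conclude $f_i$ is a nonzerodivisor modulo $I_{i-1}$. The only cosmetic difference is that you make explicit the check $d-(i-1)\geq 1$ needed to invoke the lemma, which the paper leaves implicit.
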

\begin{proof}
For $i=0,\ldots,k$ let $V_i=\cV(I_i)\subset\pset^n$ and let $d=\dim(V_0)$. First we show that each irreducible component of $V_i$ has dimension $d-i$ by induction on $i$. The claim is clear for $i=0$ because $I$ is a prime ideal. Assume the claim is true for $0\leq i<k$. Then we can apply \Cref{lem:krull} to the ideal $I_i$. By assumption we have $\dim\cV(I_{i+1})=\dim\cV(I_{i})-1=d-i-1$ so we have $(iv)$. Thus we also have $(v)$ which says that each irreducible component of $V_{i+1}$ has dimension $d-i-1$. Then by the same lemma we also have that $f_{i+1}$ is not a zero divisor modulo $I_i$ which shows that $f_1,\ldots,f_k$ is a regular sequence modulo $I_0$.
\end{proof}

\begin{lem}\label{lem:hilfun}
Let $I\subset\rset[x_0,\ldots,x_n]$ be a homogeneous ideal and $R=\rset[x_0,\ldots,x_n]/I$ with Hilbert function $HF_R$. Let $f_1,\ldots, f_r\in R$ be a regular sequence of homogeneous elements of degree $d$. The Hilbert function $HF_{R/(f_1,\ldots,f_r)}$ of $R/(f_1,\ldots,f_r)$ is
\[HF_{R/(f_1,\ldots,f_r)}(j) \quad=\quad\sum_{i=0}^r (-1)^i\cdot \binom ri\cdot HF_{R}(j-id).\]
\end{lem}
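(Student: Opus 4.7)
The plan is to proceed by induction on $r$, using the short exact sequence induced by multiplying by a non-zero-divisor. Throughout, I will use the convention that $R(-d)$ denotes the grading shift, so that $R(-d)_j = R_{j-d}$.

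For the base case $r=0$ the formula reads $HF_R(j) = HF_R(j)$, which is trivially true. For the key case $r=1$, the hypothesis that $f_1$ is not a zero divisor in $R$ means that multiplication by $f_1$ defines an injective homogeneous map $R(-d) \to R$ of degree zero. This yields the short exact sequence of graded $R$-modules
\begin{equation*}
0 \;\longrightarrow\; R(-d) \;\xrightarrow{\;\cdot f_1\;}\; R \;\longrightarrow\; R/(f_1) \;\longrightarrow\; 0.
\end{equation*}
Taking dimensions in degree $j$ gives $HF_{R/(f_1)}(j) = HF_R(j) - HF_R(j-d)$, which matches the claimed formula.

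For the inductive step, assume the formula holds for regular sequences of length $r-1$. Set $R' := R/(f_1,\ldots,f_{r-1})$; by definition of a regular sequence, the image of $f_r$ in $R'$ is not a zero divisor, so the $r=1$ case applied to $R'$ gives
\begin{equation*}
HF_{R/(f_1,\ldots,f_r)}(j) \;=\; HF_{R'}(j) - HF_{R'}(j-d).
\end{equation*}
Plugging in the inductive hypothesis for $R'$ into both terms and shifting the index of the second sum by one yields
\begin{equation*}
\sum_{i=0}^{r-1}(-1)^i\binom{r-1}{i}HF_R(j-id) \;+\; \sum_{i=1}^{r}(-1)^{i}\binom{r-1}{i-1}HF_R(j-id).
\end{equation*}
Combining the two sums and using Pascal's identity $\binom{r-1}{i}+\binom{r-1}{i-1}=\binom{r}{i}$ (with the convention $\binom{r-1}{-1}=\binom{r-1}{r}=0$ to absorb the boundary terms) gives exactly $\sum_{i=0}^{r}(-1)^i\binom{r}{i}HF_R(j-id)$.

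I do not anticipate a real obstacle here; the only subtle point is making sure that the exact sequence above actually involves a degree-$0$ map, which is why the grading shift $R(-d)$ is needed and why it is essential that all the $f_i$ share the same degree $d$. Without that common-degree assumption one would still get a formula, but the indexing would involve separate shifts $d_1,\ldots,d_r$ rather than a single $\binom{r}{i}$ binomial coefficient.
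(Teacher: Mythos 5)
Your proof is correct and follows essentially the same route as the paper: induction on $r$, the short exact sequence $0\to R^{r-1}_{j-d}\to R^{r-1}_j\to R^r_j\to 0$ given by multiplication with $f_r$ (a non-zero-divisor by the regular sequence hypothesis), and Pascal's identity to combine the two shifted sums. The only cosmetic difference is your use of the grading-shift notation $R(-d)$, which the paper avoids by working directly with graded pieces.
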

\begin{proof}
We prove the statement by induction on $r$. The case $r=0$ is trivial. In order to prove the induction step, let $R^i = R/(f_1,\ldots,f_i)$ for $i=0,\ldots,r$. For all $j\in\zset$ we have the exact sequence
\[0\to R^{r-1}_{j-d}\to R^{r-1}_{j}\to R^r_j\to 0\]
where the first map is given by multiplication with $f_r$. Therefore
\[HF_{R^r}(j)=HF_{R^{r-1}}(j)-HF_{R^{r-1}}(j-d).\]
By induction hypothesis this implies that
\begin{align*}
HF_{R^{r}}(j) &= \sum_{i=0}^{r-1}(-1)^i\binom {r-1}i HF_{R}(j-i\cdot d)-\sum_{i=0}^{r-1}(-1)^i\binom {r-1}i HF_{R}(j-(i+1)d)\\
&=\sum_{i=0}^{r}(-1)^i(\binom {r-1}i + \binom{r-1}{i-1})HF_{R}(j-i\cdot d)\\
&=\sum_{i=0}^r(-1)^i\binom ri HF_{R}(j-i\cdot d).\qedhere
\end{align*}
\end{proof}

At various places we will make use of the following version of Bertini's Theorem. 

\begin{thm}
Let $\cX\subset\pset^n$ be a real projective variety of dimension $k$. Then the following statements hold for \emph{generic} homogeneous forms $f_1,\ldots,f_r\in\rset[x_0,\ldots,x_n]$, $r\leq k$, of degree $d>0$ in the sense that the set of exceptions is contained in a lower dimensional algebraic subset of $\rset[x_0,\ldots,x_n]_d^r$. 
\begin{enumerate}[i)]
\item The homogeneous vanishing ideal of $\cX\cap\cV(f_1,\ldots,f_r)$ is generated by the homogeneous vanishing ideal of $\cX$ and $f_1,\ldots,f_r$.
\item If $\cX$ is irreducible and $r<k$, then $\cX\cap\cV(f_1,\ldots,f_r)$ is irreducible as well.
\item We have $\dim(\cX\cap\cV(f_1,\ldots,f_r))=k-r$.
\item If the singular locus of $\cX$ has dimension at most $r-1$, then $\cX\cap\cV(f_1,\ldots,f_r)$ is smooth.
\end{enumerate}
\end{thm}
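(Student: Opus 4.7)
The plan is to proceed by induction on $r$, combining the inductive hypothesis with a Veronese-embedding reduction that turns the cut by a single generic degree-$d$ hypersurface into a cut by a generic hyperplane, to which the classical Bertini theorems apply. The base case $r=0$ is trivial. For the inductive step I would first choose generic $f_1,\dots,f_{r-1}$ so that $\cY:=\cX\cap\cV(f_1,\dots,f_{r-1})$ satisfies the analogues of (i)--(iii) as well as the strengthened smoothness statement $\mathrm{sing}(\cY)=\mathrm{sing}(\cX)\cap\cV(f_1,\dots,f_{r-1})$ as a set; then I would cut $\cY$ by one further generic degree-$d$ form $f_r$.

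The reduction to linear sections goes through the $d$-th Veronese embedding $\nu_d\colon\pset^n\to\pset^N$ with $N=\binom{n+d}{d}-1$. Since $\nu_d$ is a closed immersion, $\nu_d(\cY)\subset\pset^N$ shares dimension, irreducibility, and local smoothness with $\cY$. Pullback of linear forms identifies the space of hyperplanes in $\pset^N$ with $\rset[x_0,\dots,x_n]_d$, and under this identification the hypersurface section $\cY\cap\cV(f_r)$ corresponds to the hyperplane section $\nu_d(\cY)\cap H_{f_r}$. Thus ``holds for generic $f_r$'' translates to ``holds for generic hyperplane $H\subset\pset^N$''. Since $\pset^n=\pset^n_{\cset}$ we are working over an algebraically closed field of characteristic zero, so the classical Bertini theorems apply directly.

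With this translation each conclusion follows from a classical result about generic hyperplane sections. For (iii), a generic hyperplane does not contain any irreducible component of $\nu_d(\cY)$, so $f_r$ is not a zero divisor modulo the ideal of $\cY$, and \Cref{lem:krull} yields $\dim(\cY\cap\cV(f_r))=\dim\cY-1=k-r$. For (ii), when $r<k$ the inductive hypothesis provides $\cY$ irreducible of dimension at least $2$, and Bertini's irreducibility theorem gives irreducibility of the generic hyperplane section. For the strengthened version of (iv), Bertini's smoothness theorem in characteristic zero states that for generic $H$ one has $\mathrm{sing}(\nu_d(\cY)\cap H)=\mathrm{sing}(\nu_d(\cY))\cap H$; combined with the inductive hypothesis this gives $\mathrm{sing}(\cY\cap\cV(f_r))=\mathrm{sing}(\cX)\cap\cV(f_1,\dots,f_r)$, which under the assumption $\dim\mathrm{sing}(\cX)\leq r-1$ is empty as a set after $r$ generic cuts. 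For (i), Bertini's reducedness theorem in characteristic zero shows that generic hyperplane sections of $\nu_d(\cY)$ are scheme-theoretically reduced, so $I(\cY)+(f_r)$ is radical; the inductive description $I(\cY)=I(\cX)+(f_1,\dots,f_{r-1})$ then completes the step. Each genericity condition defines a Zariski open nonempty subset of $\rset[x_0,\dots,x_n]_d^r$, and a finite intersection of such sets is still open and dense.

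The main obstacle will be statement (i): one must know that the scheme-theoretic intersection is reduced, not merely set-theoretically correct. This is the essential use of the characteristic-zero assumption, since Bertini's reducedness theorem fails in positive characteristic. A secondary but less substantial difficulty is the bookkeeping required to track the singular locus precisely under successive cuts, which forces a slight strengthening of the inductive hypothesis beyond the four statements in the theorem.
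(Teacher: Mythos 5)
Your proposal spends its effort on the wrong half of the statement. The complex Bertini theorems (irreducibility, dimension, smoothness, and the ideal-theoretic statement for generic sections) are simply cited by the paper from Jouanolou; your Veronese-plus-induction reduction is a legitimate alternative way to organize that material, but it is not where the content of this particular theorem lies. The theorem asserts genericity \emph{inside the real parameter space} $\rset[x_0,\ldots,x_n]_d^r$, for a variety that is only assumed to be cut out by real equations, and the paper's entire proof consists of the descent step: the complex Bertini theorem gives a proper algebraic exceptional set $W\subset\cset[x_0,\ldots,x_n]_d^r$, and because the real forms are Zariski dense in the complex vector space, $W$ cannot contain $\rset[x_0,\ldots,x_n]_d^r$, so $W\cap\rset[x_0,\ldots,x_n]_d^r$ is a proper (hence lower-dimensional) real algebraic subset. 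You assert without argument that ``each genericity condition defines a Zariski open nonempty subset of $\rset[x_0,\dots,x_n]_d^r$''; that assertion is exactly the claim that needs proof, and it is not automatic --- a nonempty Zariski open subset of the complex parameter space has no real points in general unless one invokes the density of $\rset[x_0,\ldots,x_n]_d^r$ in $\cset[x_0,\ldots,x_n]_d^r$. You need to add this one-paragraph argument; without it the proof does not establish the stated result.

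Two secondary issues in your treatment of the complex part. For (i), reducedness of the scheme-theoretic section $\nu_d(\cY)\cap H$ only tells you that the \emph{saturation} of $I(\nu_d(\cY))+(\ell)$ is radical; the statement that $I(\cX)+(f_1,\ldots,f_r)$ \emph{equals} the homogeneous vanishing ideal also requires this ideal to be saturated, which is a separate (and for non--arithmetically-Cohen--Macaulay $\cX$ a genuinely delicate) point, and moreover the passage back from the Veronese coordinate ring to $\rset[x_0,\ldots,x_n]$ needs justification since the two graded rings are not the same. For (iv), Bertini gives only the inclusion $\mathrm{sing}(\cY\cap H)\subseteq\mathrm{sing}(\cY)\cap H$, not the set equality you build into your strengthened inductive hypothesis; the inclusion suffices for the argument, so you should weaken the hypothesis accordingly rather than claim equality.
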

\begin{proof}
Bertini's Theorem in its usual formulation says that the above listed statements hold for generic homogeneous forms $f_1,\ldots,f_r\in\cset[x_0,\ldots,x_n]$, $r\leq k$, of degree $d>0$. As a reference for this see for example \cite[Thm. 6.10, Cor. 6.11]{jouano83}. This means that the set $U$ of exceptions is contained in a lower dimensional algebraic subset $W\subset\cset[x_0,\ldots,x_n]_d^r$. The set $U'$ of tuples $(f_1,\ldots,f_r)\in \rset[x_0,\ldots,x_n]_d^r$ of real polynomials for which one of our statements does not hold is thus contained in the algebraic subset $W'=W\cap \rset[x_0,\ldots,x_n]_d^r$ of $\rset[x_0,\ldots,x_n]_d^r$. Since the set of real points $\rset[x_0,\ldots,x_n]_d^r$ is Zariski dense in the vector space $\cset[x_0,\ldots,x_n]_d^r$, we see that $W$ does not contain $\rset[x_0,\ldots,x_n]_d^r$. Thus $W'$ is a strict algebraic subset of $\rset[x_0,\ldots,x_n]_d^r$. This shows the claim.
\end{proof}

For more on Hilbert functions and polynomials see e.g.\ \cite{stanley78}, or standard text books on commutative algebra like \cite{eisenComAlg}, \cite{eisenGeomSyz}, or \cite{BrunsCohenMacaulay}.

\section{Carath\'eodory Numbers for Moment Sequences with small Gaps}
\label{sec:gaps}

We want to start our investigation of the Carath\'eodory number in the 1-dimen\-sional case with gaps, i.e., not all monomials are present.

Let $d_1,\dots,d_r\in\nset$ be some natural numbers whose greatest common divisor is one. We consider the subring $R=\rset[t^{d_1},\ldots,t^{d_r}]$ of $\rset[t]$. By $R_{\leq d}$ we denote the vector space of polynomials in $R$ of degree at most $d$. By the assumption on the greatest common divisor there is a constant $c$ such that $t^d\in R$ for all $d\geq c$. We choose $c$ minimal with this property and denote it by $\fc$. We observe that one has
\[\dim R_{\leq d}=d+1-g \quad\text{for}\quad d\geq \fc\]
where $\fc+1-{g}$ is the number of monomials in $R$ of degree at most $\fc$. In other words, $g$ is the number of monomials that are not in $R$ (i.e., the number of gaps).

\begin{dfn}
The $k$-th \emph{Descartes number} $D_k$ of $R$ is the maximal number of different real zeros that a polynomial $f\in R_{\leq k}$ can have.
\end{dfn}

Recall that Descartes' rule of signs says that the number of positive real zeros (counted with multiplicities) of a polynomial $f=\sum_{k=0}^n c_k t^k$ is bounded from above by the number $\textrm{Var}(c_0,\ldots,c_n)$ of sign changes in the sequence $c_0,\ldots,c_n$ after erasing all zeros. The number of negative zeros (again counted with multiplicities) of $f$ is then bounded by $\textrm{Var}(c_0,-c_1,\ldots,(-1)^nc_n)$. Conversely, Grabiner \cite{grab99} constructed for all sequences of signs $(\sigma_0,\ldots,\sigma_n)$, $\sigma_i\in\{0,\pm 1\}$, a polynomial $f=\sum_{k=0}^n c_k t^k$ with only simple positive and negative zeros and $\textrm{sgn}(c_i)=\sigma_i$, that realizes both bounds. Thus Descartes' rule of signs gives a purely combinatorial way to determine an upper bound on the $k$-th Descartes number from the numbers $d_1,\ldots,d_r$. This also shows that $D_k$ is the maximal number of different real zeros that a polynomial $f\in R_{\leq k}$ with $f(0)\neq0$ can have since adding a small constant of appropriate sign does not decrease the number of real zeros of a polynomial whose only possibly multiple real root is $0$.

\begin{bsp}\label{exp:somesparse}\
\begin{enumerate}[a)]
\item Let $R=\rset[t^4,t^6,t^7]$. Then the Descartes number $D_7$ is the maximal number of real roots that a polynomial of the form $a+bt^4+ct^6+dt^7$ can have. By trying out all possible signs on the coefficients, we find by Descartes' rule of signs that such a polynomial can have at most five real zeros and by \cite{grab99} there actually is such a polynomial. Thus $D_7=5$.

\item The Descartes number does not only depend on the number of involved monomials but also on their parities. For example if $R=\rset[t^5,t^6,t^9]$, then $D_9=3$.
\end{enumerate}
\end{bsp}

\begin{prop}
For all $k\geq 0$ we have $D_{\fc+k}=D_{\fc}+k$.
\end{prop}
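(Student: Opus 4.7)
The idea is to realize $D_N$ as a purely combinatorial maximum over sign patterns, combining Descartes' rule (upper bound) with Grabiner's construction (matching lower bound), and then to show that moving from $\fc+k$ to $\fc+k+1$ increases this maximum by exactly one. Let $E=\{e\in\nset_0\,:\,t^e\in R\}$ be the numerical semigroup of exponents of $R$; by definition of $\fc$ one has $\{\fc,\fc+1,\dots\}\subset E$. List the elements of $E\cap[0,N]$ in increasing order as $e_0<e_1<\dots<e_m$, and for a sign pattern $\sigma=(\sigma_0,\dots,\sigma_m)\in\{0,\pm 1\}^{m+1}$ set $V(\sigma)=\mathrm{Var}(\sigma_0,\dots,\sigma_m)$ and $V'(\sigma)=\mathrm{Var}((-1)^{e_0}\sigma_0,\dots,(-1)^{e_m}\sigma_m)$. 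Descartes' rule bounds the number of real zeros of any $f\in R_{\leq N}$ with $f(0)\neq 0$ by $V(\sigma_f)+V'(\sigma_f)$, where $\sigma_f$ is the sign pattern of its coefficients, while Grabiner's theorem \cite{grab99} produces, for every prescribed $\sigma$ (forcing zeros at those indices $i$ with $e_i\notin E$), a polynomial $f\in R_{\leq N}$ with only simple positive and negative zeros attaining both bounds simultaneously. Together with the observation preceding the proposition, this yields
\[D_N \;=\; \max_{\sigma}\bigl[V(\sigma)+V'(\sigma)\bigr],\]
the maximum being over all sign patterns on $E\cap[0,N]$.

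Now fix $N=\fc+k$ with $k\geq 0$; then $e_m=N$ and $e_{m+1}:=N+1$ also lies in $E$, with $(-1)^{e_{m+1}}=-(-1)^{e_m}$ because $e_{m+1}-e_m=1$. For any $\sigma'=(\sigma_0,\dots,\sigma_m)$ on $E\cap[0,N]$ with $\sigma_m\in\{\pm 1\}$ and any $\sigma_{m+1}\in\{\pm 1\}$, the one-step extension $\sigma=(\sigma_0,\dots,\sigma_m,\sigma_{m+1})$ on $E\cap[0,N+1]$ satisfies
\begin{align*}
V(\sigma)-V(\sigma') &= \mathbf{1}[\sigma_m\sigma_{m+1}<0], \\
V'(\sigma)-V'(\sigma') &= \mathbf{1}[\sigma_m\sigma_{m+1}>0],
\end{align*}
which sum to $1$ regardless of the chosen signs. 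A short parity case analysis on three consecutive exponents shows that any zero entry $\sigma_i$ can be replaced by a suitable choice in $\{\pm 1\}$ without decreasing $V+V'$, so the maxima above are attained by patterns with no zero entries. Taking $\sigma'$ optimal and appending any $\sigma_{m+1}\in\{\pm 1\}$ then gives $D_{N+1}\geq D_N+1$; conversely the displayed identity applied to an optimal $\sigma$ on $E\cap[0,N+1]$ (which we may assume has $\sigma_m,\sigma_{m+1}\in\{\pm 1\}$) gives $D_{N+1}\leq D_N+1$. Induction on $k$, starting from the trivial case $k=0$, concludes the proof.

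The main obstacle I expect is the preliminary reduction to all-nonzero sign patterns via the parity case analysis; once that is in place, the displayed identity delivers the single-step increment $D_{N+1}=D_N+1$ almost immediately from the fact that $\fc+k$ and $\fc+k+1$ have opposite parity.
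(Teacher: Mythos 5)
Your proof is correct and follows essentially the same route as the paper: induction on $k$, appending one more sign at the new top exponent and using that two consecutive exponents have opposite parity, so exactly one of $\mathrm{Var}$ and the alternating $\mathrm{Var}$ increases. Your explicit reduction to sign patterns with no zero entries is a welcome extra bit of care (the paper's choice $\sigma_{\fc+k}=-\sigma_{\fc+k-1}$ is vacuous if $\sigma_{\fc+k-1}=0$), and it is even easier than a parity case analysis: replacing a zero entry by \emph{any} nonzero sign can never decrease either variation count.
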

\begin{proof}
We prove the claim by induction on $k$. The case $k=0$ is trivial. Let $k\geq1$ and assume that the claim is true for $k-1$. Then there is a sequence of signs $(\sigma_0,\ldots,\sigma_{\fc+k-1})$, $\sigma_i\in\{0,\pm 1\}$, $\sigma_0\neq0$, of coefficients of a polynomial in $R$ with $D_{\fc}+k-1$ different real zeros. In particular,
\[\textrm{Var}(\sigma_0,\ldots,\sigma_{\fc+k-1})+\textrm{Var}(\sigma_0,\ldots,(-1)^{\fc+k-1}\sigma_{\fc+k-1})=D_{\fc}+k-1.\]
Letting $\sigma_{\fc+k}=-\sigma_{\fc+k-1}$ we get that
\[\textrm{Var}(\sigma_0,\dots,\sigma_{\fc+k-1},\sigma_{\fc+k})+\textrm{Var}(\sigma_0,\ldots,(-1)^{\fc+k-1}\sigma_{\fc+k-1},(-1)^{\fc+k}\sigma_{\fc+k})=D_\fc+k\]
and another choice of $\sigma_{\fc+k}$ would not result in a larger sum, so $D_{\fc+k}=D_\fc+k$.
\end{proof}

\begin{prop}\label{prop:realzerossparse}
Let $k\geq \fc$. The maximal number of real zeros that a nonnegative polynomial $f\in R_{\leq 2k}$ can have is between $k-(\fc-D_\fc)$ and $k-\left \lceil{\frac{\fc-D_\fc-1}{2}}\right \rceil$. Here the lower bound is realized by a polynomial that is the square of an element of $R$.
\end{prop}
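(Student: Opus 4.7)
I would establish the two bounds separately.

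The lower bound is direct: by the previous proposition $D_k = D_\fc + (k - \fc) = k - (\fc - D_\fc)$ for $k \geq \fc$, and by Grabiner's construction there exists $g \in R_{\leq k}$ achieving exactly this many distinct real zeros, all simple. The square $f := g^2 \in R_{\leq 2k}$ (using that $R$ is a ring) is then nonnegative with the same zero set as $g$, giving both the lower bound and the clause that it is realised by a square of an element of $R$.

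For the upper bound, let $f \in R_{\leq 2k}$ be nonnegative. Each real zero has even multiplicity, so if $m$ denotes the number of distinct nonzero real zeros of $f$, then $2m \leq P + Q$, where $P$ and $Q$ are the numbers of positive resp.\ negative real zeros counted with multiplicity. Descartes' rule of signs together with the definition of $D_{2k}$ yields $P + Q \leq \textrm{Var}(c_0, \ldots, c_{2k}) + \textrm{Var}(c_0, -c_1, \ldots, (-1)^{2k} c_{2k}) \leq D_{2k} = 2k - (\fc - D_\fc)$, hence $m \leq \lfloor D_{2k}/2 \rfloor$. If $f(0) \neq 0$, the total count $N = m$ already lies below the claimed bound.

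The main obstacle is the case $f(0) = 0$, where naively one would pick up an extra $+1$ from the origin as a zero. Here non-negativity forces the lowest nonzero coefficient of $f$ to sit at an even position $p \geq 2$ (otherwise $f$ would change sign at $0$). The plan is to exploit this via a parity analysis of the Descartes sum: the maximum of $\textrm{Var}(c_0,\ldots,c_{2k}) + \textrm{Var}(c_0,-c_1,\ldots,(-1)^{2k}c_{2k})$ over polynomials in $R_{\leq 2k}$ with a prescribed support equals the sum, over consecutive exponents $s < s'$ in that support, of $2$ if $s' - s$ is even and $1$ if $s' - s$ is odd; taking the full support $S \cap [0, 2k]$ recovers $D_{2k}$. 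Restricting to support $\subseteq S \cap [p, 2k]$ then costs at least $2$: either there is a single lost transition $0 \to p$ with both endpoints even (contributing exactly $2$), or at least two transitions are lost, each contributing $\geq 1$. This yields $P + Q \leq D_{2k} - 2$ and hence $N = m + 1 \leq \lfloor (D_{2k} - 2)/2 \rfloor + 1 = \lfloor D_{2k}/2 \rfloor$.

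Combining both cases gives $N \leq \lfloor D_{2k}/2 \rfloor = k - \lceil (\fc - D_\fc)/2 \rceil \leq k - \lceil (\fc - D_\fc - 1)/2 \rceil$, which is the claimed upper bound.
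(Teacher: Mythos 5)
Your lower bound is exactly the paper's: square a Grabiner polynomial in $R_{\leq k}$ with $D_k=D_\fc+k-\fc$ real zeros. For the upper bound, however, you take a genuinely different route. The paper's argument is a short Rolle-type trick: if $f\geq 0$ has $N$ distinct real zeros, each is a local minimum of $f$ and between consecutive zeros there is a further critical point, so $f'$ --- and hence $tf'\in R_{\leq 2k}$ --- has at least $2N-1$ distinct real zeros, giving $2N-1\leq D_{2k}$ and $N\leq\left\lfloor\frac{D_{2k}+1}{2}\right\rfloor$, which is the stated bound. You instead combine the even-multiplicity observation with Descartes' rule directly, which forces the case analysis at the origin; your bookkeeping there is correct (the underlying formula --- each consecutive pair of exponents contributes $2$ if their difference is even and $1$ if odd --- is the standard way to evaluate the two-sided Descartes count on a prescribed support, and dropping the exponents below $p$ with $p\geq 2$ even indeed costs at least $2$). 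Your resulting bound $N\leq\left\lfloor\frac{D_{2k}}{2}\right\rfloor$ looks nominally sharper than the paper's $\left\lfloor\frac{D_{2k}+1}{2}\right\rfloor$, but the two coincide: a transition of length $\ell$ contributes $c(\ell)\equiv\ell\pmod 2$ to the combinatorial formula, so $D_\fc\equiv\fc\pmod 2$ and $D_{2k}=D_\fc+2k-\fc$ is always even. The trade-off is that the derivative argument is shorter and completely insensitive to whether $0$ is a zero of $f$, whereas your argument stays on the combinatorial side and makes explicit where the factor-of-two saving for nonnegative polynomials comes from.
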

\begin{proof}
For the lower bound just take the square of a polynomial of degree $k$ with $D_k=D_\fc+k-\fc$ real zeros. On the other hand, if $f\in R$ is a nonnegative polynomial with $N$ real zeros, then $tf'\in R$ has at least $2N-1$ zeros. Therefore, $2N-1\leq D_{2k}$ implies
\[N\leq \left \lfloor{\frac{D_{2k}+1}{2}} \right \rfloor=\left \lfloor{\frac{D_{\fc}+2k-\fc+1}{2}} \right \rfloor=k-\left \lceil{\frac{\fc-D_{\fc}-1}{2}} \right \rceil.\qedhere\]
\end{proof}

\begin{lem}\label{lem:indepsparse}
The point evaluations $l_{p_1},\ldots,l_{p_n}: R_{\leq e}\to\rset$ are linearly independent for any pairwise distinct points $p_1,\ldots,p_n\in\rset$ and $e\geq \fc+n-1$.
\end{lem}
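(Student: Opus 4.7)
The plan is to prove linear independence by exhibiting, for each choice of pairwise distinct $p_1,\ldots,p_n$, an invertible $n\times n$ submatrix of the evaluation map $R_{\leq e}\to\rset^n$, $f\mapsto(f(p_1),\ldots,f(p_n))$. The only resources I will need are that $1\in R$ together with the fact that the hypothesis $e\geq\fc+n-1$ and the definition of $\fc$ guarantee $t^{\fc},t^{\fc+1},\ldots,t^{\fc+n-1}\in R_{\leq e}$; this supplies $n+1$ candidate monomials out of which I will select $n$ that separate the given points.

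In the generic case where none of the $p_i$ equals zero, I take $a_k:=t^{\fc+k-1}$ for $k=1,\ldots,n$. The resulting evaluation matrix $(a_k(p_i))_{k,i}=(p_i^{\fc+k-1})_{k,i}$ factors as $V\cdot\diag(p_1^{\fc},\ldots,p_n^{\fc})$ with $V_{ki}=p_i^{k-1}$ the classical Vandermonde matrix, so its determinant equals $\prod_i p_i^{\fc}\cdot\prod_{i<j}(p_j-p_i)$ and is nonzero by the standing hypotheses on the $p_i$.

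If instead one point (say $p_1$) is zero, I swap the top monomial for the constant, taking $a_1:=1$ and $a_k:=t^{\fc+k-2}$ for $k=2,\ldots,n$; these all lie in $R_{\leq e}$ since $\fc+n-2\leq e$. Because $a_1(0)=1$ and $a_k(0)=0$ for $k\geq 2$, the first column of the associated evaluation matrix is $(1,0,\ldots,0)^{T}$, so cofactor expansion reduces the problem to the analogous $(n-1)\times(n-1)$ matrix for the nonzero distinct points $p_2,\ldots,p_n$, which is nonzero by the previous step.

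The only genuine obstacle is the special role played by the point $0$: every element of $R$ with low-degree gaps vanishes to order at least $\fc$ at the origin, so the naive Lagrange-style factor $t^{\fc}\prod_{i\neq j}(t-p_i)$ cannot isolate $p_j=0$ from the other points. Trading one high-degree monomial for the constant $1$ is precisely what circumvents this, and also explains why $e\geq\fc+n-1$ (rather than the \emph{a priori} more natural $e\geq\fc+n$) suffices.
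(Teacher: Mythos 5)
Your proof is correct and follows essentially the same route as the paper: both arguments establish surjectivity of the evaluation map $R_{\leq e}\to\rset^n$ by exhibiting explicit elements of $R_{\leq e}$ (the paper uses the Lagrange-type polynomials $t^{\fc}\prod_{i\neq j}(t-p_i)$ together with the constant $1$; you use the monomials $t^{\fc},\ldots,t^{\fc+n-1}$ and a Vandermonde factorization, likewise swapping in $1$ to handle a point at the origin). The only caveat is the degenerate case $\fc=0$ (i.e.\ $R=\rset[t]$, no gaps), where your second case would take $a_2=t^{\fc}=1=a_1$; but there the plain Vandermonde matrix of $1,t,\ldots,t^{n-1}$ already settles the claim, so nothing is lost.
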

\begin{proof}
We consider the map $\psi:R_{\leq e}\to \rset^n,\, g\mapsto (g(p_i))_{1\leq i\leq n}$. The polynomial $t^\fc\prod_{i=1,i\neq j}^n(t-p_i)$ is mapped to a nonzero multiple of the $j$-th unit vector except for the case when $p_j=0$. Thus we have at least all unit vectors but one in the image and the constant polynomial $1$ is mapped to the vector $(1,\dots,1)$. Thus $\psi$ is surjective which implies the claim.
\end{proof}

The following lemma generalizes \cite[Thm.\ 3.68]{didioDiss} and \cite[Thm.\ 45]{didio17Cara}.

\begin{lem}\label{lem:upperGabs}
Let $\cA\subset\rset[x]$ be the vector space of polynomials on $\rset$ generated by the monomials $\sA = \{x^{d_1}, x^{d_2},\dots, x^{d_m}\}$, $m,d_i\in\nset$, such that $d_1 = 0 < d_2 < \dots< d_m$ and $d_m$ is even. If all non-negative polynomials in $\cA$ have at most $C$ zeros. Then
\[\cat_\sA \leq C + 1.\]
\end{lem}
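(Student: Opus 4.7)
The plan is to split an arbitrary moment functional $L\in\cS_\sA$ according to whether it lies on $\partial\cS_\sA$ or in the interior of $\cS_\sA$; I bound the boundary case directly via the zero-set hypothesis and then reduce the interior case to the boundary case by peeling off a single point evaluation.

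First I would establish two convex-geometric facts about $\cS_\sA\subset\cA^*$: it is closed and it is pointed. The dual cone $P:=\{p\in\cA : p\geq 0\text{ on }\rset\}$ is itself closed as an intersection of closed half-spaces; moreover, since $d_m$ is even, $1+x^{d_m}\in\cA$ is bounded below by $1$ on $\rset$ and coercive at infinity, so a sufficiently small perturbation inside $\cA$ remains strictly positive on all of $\rset$. Hence $P$ has non-empty interior, which by convex duality forces the moment cone $\cS_\sA=P^{*}$ to be closed and pointed.

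Next I would treat the boundary case. If $L\in\partial\cS_\sA$, separation produces a non-zero $p\in P$ with $L(p)=0$. For any representing measure $\mu\in\fM_\sA(L)$ the identity $0=L(p)=\int_\rset p\,\diff\mu$ together with $p\geq 0$ and $\mu\geq 0$ forces $\supp\mu\subset\cZ(p)$. Since by hypothesis $|\cZ(p)|\leq C$, the atomic representation provided by \Cref{thm:richter} uses at most $C$ atoms; therefore $\cat_\sA(L)\leq C$.

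For $L$ in the interior of $\cS_\sA$, I would fix any $y\in\rset$ and slide along the ray $L_t:=L-t\cdot l_y$, $t\geq 0$. Since $l_y\in\cS_\sA\setminus\{0\}$ and $\cS_\sA$ is pointed, this ray exits $\cS_\sA$ in finite time, and by closedness there is a largest $t_0>0$ with $L_{t_0}\in\cS_\sA$; then $L_{t_0}\in\partial\cS_\sA$. Applying the boundary case to $L_{t_0}$ yields $L_{t_0}=\sum_{i=1}^k c_i\,l_{x_i}$ with $c_i>0$ and $k\leq C$, so $L=t_0\,l_y+\sum_{i=1}^k c_i\,l_{x_i}$ is a sum of at most $C+1$ atoms. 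Taking the supremum over $L\in\cS_\sA$ proves $\cat_\sA\leq C+1$. The main obstacle I anticipate is a clean verification of closedness and pointedness of $\cS_\sA$ from the single hypothesis that $d_m$ is even, since that property underpins both the separation in the boundary case and the finite exit time in the interior case.
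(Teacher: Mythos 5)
Your boundary case is sound and matches the paper's Step~(i), and pointedness of $\cS_\sA$ does follow from the strictly positive element $1+x^{d_m}\in\cA$. The gap is the claim that $\cS_\sA$ is closed. The identity $\cS_\sA=P^{*}$ is false here: the dual cone of $P=\{p\in\cA : p\geq 0\text{ on }\rset\}$ is the \emph{closure} of the conic hull of the point evaluations, and since $\rset$ is not compact this closure is strictly larger than $\cS_\sA$. Concretely, for $\sA=\{1,x,x^2\}$ (which satisfies every hypothesis of the lemma, with $C=1$) the moment sequences $\varepsilon^2\cdot s_\sA(1/\varepsilon)=(\varepsilon^2,\varepsilon,1)$ converge to $(0,0,1)$, which is nonnegative on $P$ but is not a moment sequence, because $s_0=\mu(\rset)=0$ forces $\mu=0$. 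So $\cS_\sA$ is not closed; the existence of a strictly positive, coercive element of $\cA$ does not help, because $x^{d_m}$ itself does not vanish at infinity relative to $1+x^{d_m}$.

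This is not merely a defect in the justification: the interior step itself fails for some choices of $y$. Take $\sA=\{1,x^2\}$, $L=l_1+l_{-1}=(2,2)$ (an interior point) and $y=0$. Then $L-t\,l_0=(2-t,2)$ lies in $\cS_\sA$ exactly for $t\in[0,2)$, and the exit point $(0,2)$ is not a moment functional --- it is a ``mass at infinity'' functional, twice the map $p\mapsto(\text{coefficient of }x^{d_m}\text{ in }p)$. In general the ray $L-t\,l_y$ can leave $\cS_\sA$ through a point of $\overline{\cS_\sA}\setminus\cS_\sA$, so there need not be a largest $t_0$ with $L_{t_0}\in\cS_\sA$. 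This is precisely why the paper's proof homogenizes to $\pset^1$, where the cone $\cS_\sB$ \emph{is} closed (since $x^{d_m}+y^{d_m}>0$ on the compact space $\pset^1$), and then runs a perturbation/continuity argument (the $\varepsilon$-tube and the choice of a small $\delta$) to guarantee that the boundary functional it lands on carries no atom at infinity and is therefore a genuine moment functional on $\rset$, to which Step~(i) applies. To complete your argument you would need an analogous device for selecting the direction; ``fix any $y\in\rset$'' is not enough.
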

\begin{proof}
Let $s\in\cS_\sA$ be a moment sequence.

Step i):  If $s$ is in the boundary of the moment cone there exists a $p\in\cA$ with $p\geq 0$ and $L_s(p)=0$, i.e., all point evaluations are located at the zeros of $p$. Hence, $s$ requires at most $C$ point evaluations.

Step ii): Assume now $s$ is in the interior of the moment cone.

Homogenize $\sA$, i.e., $\sB := \{y^{d_m},x^{d_2} y^{d_m-d_2},\dots,x^{d_m}\}$. Since $s$ is a moment sequence, we have $s = \sum_{i=1}^l c_i\cdot s_\sA(x_i) = \sum_{i=1}^l c_i\cdot s_\sB((x_i,1))$. Since $x^{d_m},y^{d_m}\in\sB$ we have $x^{d_m}+y^{d_m}>0$ on $\pset^1$ and the moment cone $\cS_\sB$ is closed. Hence, by \cite[Prop.\ 8]{didio17Cara} there exists an $\varepsilon>0$ such that
\[c_{q}(x,y) := \sup\{r\geq 0 \,|\, q - r\cdot s_\sB(x,y)\in \cS_\sB\} \tag{$*$}\]
is attained and continuous for all $q\in B_\varepsilon(s)$, $(x,y)\in\pset^1$, we have $B_\varepsilon(s)\subset\inter\cS$, and
\[\Gamma:=\sup_{q\in B_\varepsilon(s)} c_{q}((0,1))<\infty.\tag{$**$}\]
Let
\[T := \bigcup_{c\in [0,\Gamma+1]} \overline{B_\varepsilon(s-c\cdot s_\sB(0,1))}\]
be the $\varepsilon$-tube around the line $s - [0,\Gamma+1]\cdot s_\sB((0,1))$. Write $T = T_1 \cup T_2 \cup T_3$ with $T_1 := T\cap \inter\cS_\sB$, $T_2 := T\cap\partial\cS_\sB$, and $T_3 := T\setminus (T_1\cup T_2)$. I.e., $T_1$ is the part of the $\varepsilon$-tube inside the moment cone, $T_2$ is the intersection of the $\varepsilon$-tube with the boundary of the moment cone, and $T_3$ is the part of the $\varepsilon$-tube outside the moment cone.

Since the moment cone is closed (and convex), also $T_2$ is closed and every path starting in $T_1$ and ending in $T_3$ contains at least one point in $T_2$. We define
\[t' := t - c_q((0,1))\cdot s_\sB((0,1))\]
for all $t\in B_\varepsilon(s)$. By ($*$) and ($**$) we have for all $t\in B_\varepsilon(s)$ that $t'$ is a moment sequence without an atom at $(0,1)$ (by maximality of $c_q((0,1))$), i.e., $t'$ is a moment sequence on $\rset$ and in the boundary of $\cS_\sA$. By step (i) $t'$ requires at most $k$ point evaluations. Since $s_\sB((x,y))$ is continuous in $(x,y)$, there exists a $\delta = \delta(\varepsilon)>0$ such that
\[s' := s - c_s((\delta,1))\cdot s_\sB((\delta,1))\in T_2,\]
i.e., also $s'$ is a moment sequence on $\rset$ with at most $k$ point evaluations. Hence, $s = s' + c_s((\delta,1))\cdot s_\sB((\delta,1))$ is a moment sequence on $\rset$ with $\cat_\sA(s) \leq C+1$.

Finally, since $s\in\cS_\sA$ was arbitrary we have $\cat_\sA \leq C+1$.
\end{proof}

\begin{thm}\label{thm:sparsebounds}
Let $R=\rset[t^{d_1},\dots,t^{d_r}]$ and $k\geq \fc$. Every moment functional $L:R_{\leq 2k}\to\rset$ is a conic combination of at most $k+1-\left \lceil{\frac{\fc-D_\fc-1}{2}}\right \rceil$ point evaluations. There are moment functionals $L:R_{\leq 2k}\to\rset$ that are not a conic combination of less than $k-(\fc-D_\fc)$ point evaluations.
\end{thm}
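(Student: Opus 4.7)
The plan is to deduce the upper bound from Lemma~\ref{lem:upperGabs} applied to $\cA = R_{\leq 2k}$ and to realize the lower bound by constructing a moment functional whose support is the zero set of the square of an element of $R_{\leq k}$ with as many real roots as possible.

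For the upper bound, first observe that since $k \geq \fc$ we have $2k \geq \fc$, so $t^{2k}\in R$; hence the highest-degree monomial in the monomial basis $\sA$ of $R_{\leq 2k}$ is $t^{2k}$, which is of even degree. This verifies the hypothesis of Lemma~\ref{lem:upperGabs}. By the upper half of Proposition~\ref{prop:realzerossparse}, every non-negative polynomial in $R_{\leq 2k}$ has at most
\[C \;:=\; k - \left\lceil \frac{\fc - D_\fc - 1}{2}\right\rceil\]
real zeros. Plugging this into Lemma~\ref{lem:upperGabs} immediately yields $\cat_\sA \leq C+1 = k+1-\lceil(\fc-D_\fc-1)/2\rceil$.

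For the lower bound, the constructive half of Proposition~\ref{prop:realzerossparse} provides a polynomial $f\in R_{\leq k}$ with $N := k - (\fc-D_\fc)$ distinct real zeros $x_1,\ldots,x_N$. Set $a := f^2 \in R_{\leq 2k}$, which is non-negative with $\cZ(a) = \{x_1,\ldots,x_N\}$. Pick any $c_1,\ldots,c_N>0$ and define the moment functional
\[L \;:=\; \sum_{i=1}^N c_i\, l_{x_i}.\]
Then $L(a) = \sum c_i f(x_i)^2 = 0$, so Theorem~\ref{thm:caraLowerZeroSet} gives
\[\cat_\sA(L) \;=\; \dim\lin\{s_\sA(x_i) : i=1,\ldots,N\}.\]
By Lemma~\ref{lem:indepsparse} the $N$ point evaluations $l_{x_1},\ldots,l_{x_N}$ are linearly independent on $R_{\leq 2k}$ provided $2k \geq \fc + N - 1$, equivalently $k \geq D_\fc - 1$. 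Since $D_\fc\leq \fc \leq k$, this holds automatically, and thus $\cat_\sA(L) = N = k - (\fc-D_\fc)$ as claimed.

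The main thing to verify is the alignment of the invoked lemmas with the hypotheses of the statement: on the upper side one must ensure that $R_{\leq 2k}$ really does contain an even-degree top monomial so that Lemma~\ref{lem:upperGabs} applies, and on the lower side one must check the numerical condition $k \geq D_\fc - 1$ so that the atoms supplied by Proposition~\ref{prop:realzerossparse} actually yield $N$ linearly independent point evaluations via Lemma~\ref{lem:indepsparse}. Both reduce to the standing assumption $k\geq \fc$ together with the trivial bound $D_\fc \leq \fc$.
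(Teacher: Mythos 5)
Your proposal is correct and follows essentially the same route as the paper: the upper bound comes from combining Proposition~\ref{prop:realzerossparse} with Lemma~\ref{lem:upperGabs} after noting that $1, t^{2k}\in R_{\leq 2k}$, and the lower bound from the square of the extremal polynomial of Proposition~\ref{prop:realzerossparse} together with Lemma~\ref{lem:indepsparse} and Theorem~\ref{thm:caraLowerZeroSet}. You merely spell out the linear-independence check ($2k\geq\fc+N-1$, which reduces to $k\geq D_\fc-1$) that the paper leaves implicit, and this verification is correct.
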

\begin{proof}
For the upper bound we combine \Cref{prop:realzerossparse} and \Cref{lem:upperGabs}. We have $1\in R_{\leq 2k}$ and since $k\geq \fc$ we have by the minimality of $\fc$ (see second paragraph at the beginning of this section) that $x^{2k}\in R_{\leq 2k}$. So the monomial basis $\sA$ of $R_{\leq 2k}$ fulfills the conditions in \Cref{lem:upperGabs} and by \Cref{prop:realzerossparse} every non-negative polynomial in $R_{\leq 2k}$ has at most $C = k-\left \lceil{\frac{\fc-D_\fc-1}{2}}\right \rceil$ zeros. \Cref{lem:upperGabs} implies that every moment sequence/functional is represented by at most $C + 1 = k+1-\left \lceil{\frac{\fc-D_\fc-1}{2}}\right \rceil$ point evaluations.

The lower bound follows from \Cref{prop:realzerossparse}, \Cref{lem:indepsparse}, and \Cref{thm:caraLowerZeroSet}.
\end{proof}

\begin{bsp}\
\begin{enumerate}[a)]
\item Let $R=\rset[t^2,t^{2r+1}]$ with $r\geq0$. In this case we have $\fc=D_\fc=2r+1$. Thus for $k\geq 2r+1$ every moment functional $L:R_{\leq 2k}\to\rset$ is a conic combination of at most $k+1$ point evaluations and there are moment functionals which are not a conic combination of less than $k$ point evaluations.

\item Let $R=\rset[t^r,t^{r+1},t^{r+2},\ldots]$. Then $\fc=r$ and $D_\fc=1$ if $r$ is odd and $D_\fc=2$ if $r$ is even, so the difference between upper and lower bound in \Cref{thm:sparsebounds} grows linearly in $r$. This situation is in sharp contrast to the results from \Cref{sec:smoothcurves} on smooth curves.
\end{enumerate}
\end{bsp}

\section{Carath\'eodory Numbers for Measures supported on algebraic Varieties}
\label{sec:smoothcurves}

Now for any subset $\cX\subset\rset^n$ we are interested in the ring $\rset[\cX]$ of polynomial functions $\cX\to\rset$. The finite dimensional vector space of all functions $\cX\to\rset$ that can be represented by a polynomial of degree at most $d$ is denoted by $\rset[\cX]_{\leq d}$. If $I\subset\rset[x_1,\ldots,x_n]$ is the ideal of all polynomials vanishing on $\cX$, then $\rset[\cX]=\rset[V_0]=\rset[x_1,\ldots,x_n]/I$ where $V_0\subset\rset^n$ is the Zariski closure of $\cX$. Let $V\subset\pset^n$ be the Zariski closure of $V_0$ in the complex projective space. Then one has
\begin{equation}\label{eq:hfdim}
HF_V(d)\quad=\quad\dim \rset[\cX]_{\leq d}.
\end{equation}
From Richter's Theorem we thus immediately get the following.

\begin{prop}\label{prop:upper}
Every moment functional $L:\rset[\cX]_{\leq 2d}\to\rset$ is a conic combination of at most $HF_V(2d)$ point evaluations $l_{x_i}$ with $x_i\in\cX$. If $\cX$ consists of less than $HF_V(2d)$ path-connected components, then $HF_V(2d)-1$ point evaluations are sufficient. In particular, for large $d$ this upper bound grows like a polynomial whose degree is the dimension of the Zariski closure of $\cX$.
\end{prop}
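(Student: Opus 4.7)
The first claim is an immediate application of Richter's theorem (\Cref{thm:richter}) to the finite dimensional space $\cA = \rset[\cX]_{\leq 2d}$. Indeed, by (\ref{eq:hfdim}) we have $\dim \cA = HF_V(2d)$, so Richter's theorem produces a representing atomic measure on $\cX$ with at most $\dim \cA = HF_V(2d)$ atoms, which is precisely the asserted conic combination of point evaluations $l_{x_i}$ with $x_i\in\cX$.

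For the refined bound, my plan is to apply the Fenchel--Eggleston theorem in an affine slice of the moment cone. Set $m = HF_V(2d)$ and choose a basis $\sA = \{1, a_2, \dots, a_m\}$ of $\rset[\cX]_{\leq 2d}$ containing the constant function $1$, so that the moment map $s_\sA$ takes $\cX$ into the affine hyperplane $H = \{s \in \rset^m \,:\, s_1 = 1\} \cong \rset^{m-1}$. A moment functional $L$ with $L(1) = 0$ is represented by a nonnegative measure of total mass zero and hence vanishes, so we may rescale to $L(1) = 1$, in which case $L \in \conv(s_\sA(\cX)) \subset H$. Since $s_\sA$ is continuous, $s_\sA(\cX)$ has at most as many path-connected components as $\cX$, which by hypothesis is at most $m-1$. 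The Fenchel--Eggleston theorem in the $(m-1)$-dimensional space $H$ then expresses $L$ as a convex combination of at most $m-1$ points of $s_\sA(\cX)$; undoing the rescaling yields the desired conic combination of at most $HF_V(2d) - 1$ point evaluations $l_{x_i}$.

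The asymptotic statement is immediate from the standard properties of Hilbert functions recalled in the preliminaries: for $d$ large enough, $HF_V(2d)$ agrees with the Hilbert polynomial $HP_V(2d)$, and $HP_V$ has degree equal to $\dim V$, which coincides with the dimension of the Zariski closure of $\cX$. The only point that requires real care is the invocation of Fenchel--Eggleston in place of the plain Carath\'eodory bound, together with the small technicality of the affine normalization; beyond this, everything is routine, so no serious obstacle is anticipated.
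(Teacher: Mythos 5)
Your proof is correct and follows essentially the same route as the paper: Richter's theorem (\Cref{thm:richter}) together with (\ref{eq:hfdim}) for the bound $HF_V(2d)$, and a ``few path-connected components saves one atom'' argument for the refined bound. The only difference is that where the paper cites \cite[Thm.~12]{didio18gaussian} as a black box, you make that step self-contained by normalizing $L(1)=1$ and invoking the Fenchel--Eggleston theorem on the affine slice $\{s_1=1\}$, which is exactly the convexity argument underlying the cited result.
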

\begin{proof}
In (\ref{eq:hfdim}) we already established that $\dim \rset[\cX]_{\leq 2d} = HF_V(2d)$. Since $\cX$ is a measurable space and monomials are measurable functions, Richter's \Cref{thm:richter} implies that $L$ can be represented by at most $\dim \rset[\cX]_{\leq 2d} = HF_V(2d)$ point evaluations.

Since $\cX$ is a topological space which consists of at most $HF_V(2d) -1$ path-connected components, $1\in \rset[\cX]$, and $\rset[\cX]$ consists of continuous functions we have that $s_\sA(\cX)$ consists of at most $HF_V(2d)-1$ path-connected components. All conditions of \cite[Thm.\ 12]{didio18gaussian} are fulfilled which implies the upper bound.
\end{proof}

In order to provide lower bounds as well, we will need the following lemma.

\begin{lem}\label{lem:totreal}
Assume that $V$ is irreducible with homogeneous vanishing ideal $I$ and that its singular locus has codimension at least $2$. If $k=\dim(V)$, then, for all $d$ large enough, there are $k$ real homogeneous polynomials $f_1,\ldots,f_k$ of degree $d$ whose common zero set $Z$ on $V$ consists of $d^k\cdot \deg(V)$ different points that are all real and contained in $V_0$. Furthermore, one can choose the $f_1,\ldots,f_k$ to be a regular sequence with the property that they generate the homogeneous vanishing ideal of $Z$ modulo $I$.
\end{lem}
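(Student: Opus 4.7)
\medskip
My plan is to argue by induction on $k = \dim(V)$, performing a Bertini-type real hyperplane section at each inductive step and invoking a real Abel--Jacobi argument in the base case $k=1$.

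For the inductive step ($k \geq 2$) I would choose a generic real form $f_1 \in \rset[x_0,\ldots,x_n]_d$ that changes sign on $V(\rset)$; this is easy to produce (for instance, impose that $f_1$ vanish at one point of $V_0$ and be strictly positive at another). Bertini's Theorem, as stated earlier in the excerpt, guarantees that for such a generic $f_1$ the variety $V_1 := V \cap \cV(f_1) \subset \pset^n$ is smooth, irreducible, of dimension $k-1$, and has homogeneous vanishing ideal $I + (f_1)$. The sign change of $f_1$ forces $V_1(\rset)$ to contain the $(k-1)$-dimensional real semialgebraic boundary of $\{f_1 > 0\} \cap V(\rset)$; since $V_1(\rset)$ therefore has real dimension $k-1 = \dim_{\cset} V_1$, it is Zariski dense in $V_1$, and so $V_1$ satisfies the hypotheses of the lemma in dimension $k-1$. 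By induction there exist $k-1$ real forms $g_2, \ldots, g_k$ of degree $d$ on $V_1$ whose common zero set $Z$ consists of $d^{k-1} \cdot \deg(V_1) = d^{k-1} \cdot d \cdot \deg(V) = d^k \cdot \deg(V)$ distinct real points in $(V_1)_0 \subseteq V_0$. Lifting $g_2, \ldots, g_k$ to real forms $f_2, \ldots, f_k \in \rset[x_0,\ldots,x_n]_d$ via the surjection $\rset[x_0,\ldots,x_n]_d \twoheadrightarrow (R/(f_1))_d$, the common zero set of $f_1, \ldots, f_k$ on $V$ coincides with that of $g_2, \ldots, g_k$ on $V_1$, namely $Z$.

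The base case is $k=1$: here $V$ is a smooth irreducible projective real curve with Zariski-dense real points, i.e., $V(\rset)$ is a nonempty disjoint union of topological circles. One then needs, for $d$ sufficiently large, a real section of the line bundle $\mathcal{O}_V(d)$ whose zero divisor is totally real of degree $d \cdot \deg(V)$. This is a real Abel--Jacobi statement: as $d$ grows, the image of the sum map from the real symmetric product $V(\rset)^{(d \cdot \deg V)}$ to $\mathrm{Pic}^{d \cdot \deg V}(V)(\rset)$ eventually covers the connected component containing the class $d \cdot [H]$ of the restriction of the hyperplane bundle, and a preimage is the desired totally real divisor.

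For the regular-sequence and ideal-generation claims I invoke \Cref{cor:regseq} iteratively. By the inductive construction, at each step $i$ the intermediate variety $V_i := V \cap \cV(f_1, \ldots, f_i)$ is smooth and irreducible of dimension $k-i$ with homogeneous vanishing ideal $I + (f_1, \ldots, f_i)$, so this ideal is radical and the dimension-drop condition of \Cref{cor:regseq} holds at every step. Hence $f_1, \ldots, f_k$ is a regular sequence modulo $I$. Finally, $(f_1, \ldots, f_k) + I$ is a radical ideal whose zero locus is the reduced $0$-dimensional set $Z$, so it equals the homogeneous vanishing ideal of $Z$ modulo $I$.

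The main technical obstacle is the base case: producing a real form of degree $d$ whose restriction to a smooth real curve $V$ has totally real zero divisor. This is the place where the Zariski density of the real points of $V$ is used most crucially and requires genuine real-algebraic-geometry input (beyond the generic complex Bertini arguments); the remainder of the proof is a clean induction via Bertini and \Cref{cor:regseq}.
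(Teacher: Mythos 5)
Your outline has the right shape---reduce to curves and use a totally-real-divisor result there---but the induction as you set it up does not close, and this is the central gap. In the inductive step you cut $V$ with a form $f_1$ of degree $d$ and then apply the inductive hypothesis to $V_1=V\cap\cV(f_1)$ \emph{with the same degree} $d$. But the inductive hypothesis only produces the forms $g_2,\ldots,g_k$ for all $d'$ exceeding a threshold $d_0(V_1)$ depending on $V_1$, and $V_1$ itself depends on $d$ (its degree is $d\cdot\deg V$, and the thresholds in totally-real-divisor results grow with the genus/degree of the curve one ends up with). So there is no reason that $d\geq d_0(V_1)$, and the quantifiers are circular. The paper avoids exactly this by cutting with $k-1$ \emph{linear} forms first, obtaining a single smooth curve $X$ that is independent of $d$, applying Scheiderer's theorem once to $X$ for $d\geq d_0(X)$, and then manufacturing the required degree-$d$ forms as products $f_i=\prod_{j=1}^d(l_i+j\epsilon x_0)$ of $d$ perturbed linear forms, so that the zero set is a union of $d^{k-1}$ nearby curve sections each contributing $d\cdot\deg V$ real points.

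Two further problems. First, your appeal to Bertini to conclude that $V_1=V\cap\cV(f_1)$ is \emph{smooth} is a misapplication: part (iv) of the stated theorem requires $\dim(\mathrm{Sing}\,\cX)\leq r-1$, i.e.\ a finite singular locus when $r=1$, whereas the hypothesis only gives $\operatorname{codim}(\mathrm{Sing}\,V)\geq2$, so for $k\geq3$ a single generic section still meets the singular locus and is in general singular. (One could salvage the weaker statement that $\mathrm{Sing}(V_1)$ again has codimension $\geq2$ in $V_1$, which is what an induction would actually need, but as written your intermediate varieties are claimed smooth and they need not be.) Second, your base case is precisely the nontrivial real-algebraic input: the existence, for $d\gg0$, of a real form of degree $d$ whose divisor on a smooth real curve with nonempty real locus is totally real and simple. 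The Abel--Jacobi sketch you give is a reasonable heuristic for why such a theorem should hold, but it is not a proof---one must control which connected component of $\mathrm{Pic}^{de}(V)(\rset)$ contains $d[H]$ and show the real symmetric-product sum map eventually surjects onto it. This is exactly Scheiderer's result \cite[Cor.~2.10, Rem.~2.14]{scheider00}, which the paper cites rather than reproves; without that citation (or a complete argument) your proof is incomplete at its crux. Your treatment of the regular-sequence and vanishing-ideal claims via \Cref{cor:regseq} and a final radicality/perturbation argument is essentially the same as the paper's and is fine once the construction itself is repaired.
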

\begin{proof}
By Bertini's theorem, for a generic choice of $k-1$ real linear forms $l_1,\ldots,l_{k-1}$ the set $V\cap\cV(l_1,\cdots,l_{k-1})\subset\pset^n$ is a real smooth irreducible curve $X$. Since the real points of $V$ are Zariski dense in $V$ we can furthermore assume that $X(\rset)$ is nonempty. Now by \cite[Cor. 2.10, Rem. 2.14]{scheider00}, for large enough $d$, there is a homogeneous polynomial $f$ of degree $d$ all of whose zeros on $X$ are real, simple and do not lie at the hyperplane at infinity. Since $\deg X=\deg V=:e$, these are $de$ many points. The same is true for the zeros of $f$ on $X'$ where $X'$ is the intersection of $V$ with linear forms $l_1',\dots,l_{k-1}'$ that are sufficiently small perturbations of $l_1,\dots,l_{k-1}$. Therefore, for sufficiently small $\epsilon>0$, the common zero set on $V$ of $f$ with the polynomials $f_i=\prod_{j=1}^d(l_i+j\epsilon\cdot x_0)$, $i=1,\ldots,k-1$, consists of exactly $d^ke$ real, simple points that do not lie in the hyperplane at infinity. Thus these $d^ke$ points lie in $V_0$.
 
In order to obtain the additional properties, we can perturb $f_1,\ldots,f_k$ a little bit so that each $I_i=I+(f_1,\ldots,f_i)$ is a radical ideal by Bertini's Theorem. Finally, since the dimension of $\cV(I_i)$ is exactly $k-i$, the $f_1,\ldots,f_k$ have to form a regular sequence modulo $I$ by \Cref{cor:regseq}.
\end{proof}

\begin{rem}
In the proof of the preceding lemma lies the reason why, in this section, we get lower bounds only for sufficiently large $d$. Namely, Scheiderer's result in \cite{scheider00}, which states that for every smooth algebraic curve $X$ there are polynomials of degree $d$ that have only real zeros on $X$, is only true for sufficiently large $d$. To find an explicit lower bound on $d$, that ensures the existence of such polynomials, is an open problem, except for the case of $M$-curves where a good lower bound has been provided by Huisman \cite{huisman01}.
\end{rem}

\begin{bsp}
The assumption on the singular locus in \Cref{lem:totreal} is necessary. Consider for example the singular plane curve $V_0=\cV(x^4-y^3)\subset\rset^2$. It is the image of the map $\rset\to\rset^2,t\mapsto(t^3,t^4)$. Now the zeros of a polynomial $f\in\rset[x,y]$ of degree $d$ on $V_0$ correspond to the roots of the univariate polynomial $f(t^3,t^4)$. But by Descartes' rule of signs this can not have $4d$ different real zeros. We dealt with curves of this kind in \Cref{sec:gaps}.
\end{bsp}

From this we get our main theorem on the Carath\'eodory numbers for measures supported on an algebraic set.

\begin{thm}\label{thm:maincara}
Let $\cX=V_0\subset\rset^n$ be Zariski closed of dimension $k>0$ such that its projective closure $V\subset\mathbb{P}^n$ is irreducible and its singular locus has codimension at least $2$. Let $P\in\qset[t]$ be the Hilbert polynomial of $V$. For large enough $d>0$, every moment functional $L:\rset[\cX]_{\leq 2d}\to\rset$ is a conic combination of at most $P(2d) -1$ point evaluations $l_{x_i}$ with $x_i\in\cX$. On the other hand, there are moment functionals $L:\rset[\cX]_{\leq 2d}\to\rset$ that are not a conic combination of fewer than
\[P(2d)-k\cdot P(d)+\binom k2\]
point evaluations $l_{x_i}$.
\end{thm}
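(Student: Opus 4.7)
The plan is to treat the two bounds separately. For the upper bound $P(2d)-1$ I would invoke \Cref{prop:upper}. Since $V$ has dimension $k\geq 1$, the Hilbert polynomial $P$ has degree $k\geq 1$ and grows to infinity, while the real algebraic set $\cX=V_0\subset\rset^n$ has only finitely many connected (hence path-connected) components. Thus for all sufficiently large $d$ one has $HF_V(2d)=P(2d)$ strictly larger than the number of path-connected components of $\cX$, and \Cref{prop:upper} yields the bound $P(2d)-1$.

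For the lower bound the strategy is to produce, via \Cref{thm:caraLowerZeroSet}, a non-negative element of $\rset[\cX]_{\leq 2d}$ with a large real zero set $Z\subset\cX$ along which the point evaluations are highly independent. Concretely, \Cref{lem:totreal} supplies real homogeneous $f_1,\dots,f_k\in\rset[x_0,\dots,x_n]$ of degree $d$ whose common zero set $Z$ on $V$ consists of $d^k\deg(V)$ distinct real points of $V_0$, forming a regular sequence modulo the homogeneous vanishing ideal $I$ of $V$ and generating the homogeneous vanishing ideal of $Z$ modulo $I$. Writing $\tilde f_i := f_i|_{x_0=1}$, I would set
\[
a \;:=\; \tilde f_1^2 + \cdots + \tilde f_k^2 \;\in\; \rset[\cX]_{\leq 2d},
\]
which is non-negative on $\cX$ and vanishes precisely on $Z$. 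Choosing any positive conic combination, say $L:=\sum_{x\in Z}l_x$, gives a bona fide moment functional with $L(a)=0$, so \Cref{thm:caraLowerZeroSet} yields
\[
\cat_\sA(L) \;=\; \dim\lin\{s_\sA(x) : x\in Z\}.
\]

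It remains to identify this dimension with $P(2d)-k\cdot P(d)+\binom{k}{2}$. Setting $R:=\rset[x_0,\dots,x_n]/I$ and $J:=(f_1,\dots,f_k)\subset R$, the dehomogenization isomorphism recalled in the Preliminaries, together with the fact that every point of $Z$ lies in the affine chart $x_0=1$, identifies the span of $\{l_x : x\in Z\}$ in $\rset[\cX]_{\leq 2d}^*$ with $(R/J)_{2d}$, so its dimension equals $HF_{R/J}(2d)$. Since $f_1,\dots,f_k$ is a regular sequence in $R$ of homogeneous elements of degree $d$, \Cref{lem:hilfun} gives
\[
HF_{R/J}(2d) \;=\; \sum_{i=0}^k (-1)^i\binom{k}{i}HF_R(2d-id).
\]
For $i\geq 3$ the argument $2d-id\leq -d<0$ is negative, so $HF_R(2d-id)=0$. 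For $i=0,1$ and $d$ large enough we have $HF_R(2d)=P(2d)$ and $HF_R(d)=P(d)$. For $i=2$ the argument is $0$, and since $V$ is irreducible the ring $R$ is a domain with $R_0=\rset$, so $HF_R(0)=1$. The sum therefore telescopes to $P(2d)-k\cdot P(d)+\binom{k}{2}$, giving the desired lower bound.

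The main obstacle is encapsulated in \Cref{lem:totreal}: producing $k$ real polynomials of a common degree $d$ whose joint zero set on $V$ is totally real of the expected cardinality, forming a regular sequence that generates the vanishing ideal of that zero set. This depends on Scheiderer's density theorem for totally real divisors on smooth curves, which is precisely why an explicit threshold for ``$d$ large enough'' is elusive. Once such $f_i$ are in hand, the combinatorial computation of $HF_{R/J}(2d)$ via \Cref{lem:hilfun} is straightforward, the only minor point being to note that the summation truncates at $i=2$ because $2d-3d<0$.
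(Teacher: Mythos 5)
Your proposal is correct and follows essentially the same route as the paper: \Cref{prop:upper} plus finiteness of the number of path-connected components for the upper bound, and \Cref{lem:totreal}, the sum of squares $\tilde f_1^2+\cdots+\tilde f_k^2$, \Cref{thm:caraLowerZeroSet}, and \Cref{lem:hilfun} for the lower bound, including the observation that only the $i=0,1,2$ terms survive. The only extra care you take (correctly) is spelling out why $HF_R(0)=1$ and why the sum truncates, which the paper states more briefly.
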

\begin{proof}
For large enough $d$ we have $P(2d)=HF_V(2d)$. Therefore, by \Cref{prop:upper}, in order to prove the upper bound, it suffices to show that $P(2d)$ exceeds the number $m$ of path-connected components of $\cX$ for large enough $d$. Since $m$ is finite by \cite[Thm.\ 2.4.5, Prop.\ 2.5.13]{bochnak98}, this is clear because $P$ has positive degree $k$. For the lower bound we consider the polynomials $f_1,\ldots,f_k$ of degree $d$ from \Cref{lem:totreal} whose common zero set $Z$ on $\cX$ consists of $d^k\cdot\deg V$ simple points. The polynomial
\[f_1(1,x_1,\ldots,x_n)^2+\ldots+f_k(1,x_1,\ldots,x_n)^2\]
is nonnegative and has the same zero set $Z$ on $\cX$. By \Cref{thm:caraLowerZeroSet} a lower bound is then given by the dimension of the span of the point evaluations of polynomials of degree at most $2d$ in $Z$. This is the same as the dimension of the vector space $(\rset[x_0,\ldots,x_n]/J)_{2d}$ where $J$ is the homogeneous vanishing ideal of $Z$ considered as a subset of $\pset^n$. This is by definition $HF_Z(2d)$. Since $J$ is given by $I+(f_1,\ldots,f_k)$ where $I$ is the homogeneous vanishing ideal of $V$, and since the $f_i$ form a regular sequence, we have
\[HF_Z(2d)=HF_{I+(f_1,\ldots,f_k)}(2d)=\sum_{i=0}^k(-1)^i\binom ki HF_I(d\cdot(2-i))\]
by \Cref{lem:hilfun}. It follows immediately from the definition of the Hilbert function that $HF_I(m)=0$ for $m<0$ and $HF_I(0)=1$. Therefore, only the first three terms of the above sum are nonzero and we obtain:
\[HF_Z(2d)=HF_I(2d)-k HF_I(d)+\binom k2 .\]
For large $d$ the Hilbert function coincides with the Hilbert polynomial which shows the claim.
\end{proof}

\begin{exm}
This example is to demonstrate that both upper and lower bound from \Cref{thm:maincara} are false when $d$ is not large enough.
 Consider the plane curve $\cX\subset\rset^2$ defined as the zero set of the polynomial $x^8+y^8-1$. It is path-connected and its Zariski closure $C\subset\pset^2$ is smooth with Hilbert polynomial $P(d)=8d-20$. Thus for $d=1$ we have $$P(2d)-1=P(2)-1=-5<0$$which cannot be an upper bound. However, by \Cref{prop:upper} an upper bound in the case $d=1$ is given by $$HF_C(2d)-1=HF_C(2)-1=5.$$Finally, again for $d=1$, we have $$P(2d)-k\cdot P(d)+\binom k2=P(2)-P(1)=8 $$which exceeds the upper bound and thus cannot be a lower bound.
\end{exm}

\begin{exm}
Let $\cX\subset\rset^n$, $n\geq2$, be the boundary of the unit ball, i.e., the zero set of $1-(x_1^2+\ldots+x_n^2)$. Its Zariski closure $$V=\cV(x_0^2-(x_1^2+\ldots+x_n^2))\subset\pset^n$$ is irreducible and smooth. A direct computation gives its Hilbert polynomial: $$P(d)=\binom{n+d-1}{d}+\binom{n+d-2}{d-1}$$ It agrees with its Hilbert function for $d\geq0$. By \Cref{thm:maincara} there is a $d_0$ such that for all $d\geq d_0$, every moment functional $L:\rset[\cX]_{\leq 2d}\to\rset$ is a conic combination of at most $$\binom{n+2d-1}{2d}+\binom{n+2d-2}{2d-1}-1$$ point evaluations with points in $\cX$. Moreover, there are moment functionals $L:\rset[\cX]_{\leq 2d}\to\rset$ that are not a conic combination of fewer than
\[\binom{n+2d-1}{2d}+\binom{n+2d-2}{2d-1}-(n-1)\left(\binom{n+d-1}{d}+\binom{n+d-2}{d-1}\right)+\binom {n-1}2\]
point evaluations. We claim that in this case we can even choose $d_0=1$. Indeed, since $\cX$ is path-connected, $P(2d)$ exceeds the number of connected components whenever $d>0$. Moreover, letting $l_i=x_i$ the curve $X$ in the proof of \Cref{lem:totreal} is the Zariski closure of the unit circle in the plane. Then clearly for any $d\geq 1$ there is a polynomial of degree $d$ all of whose zeros on $X$ are real, simple and do not lie at the hyperplane at infinity: Consider for example the union of $d$ distinct lines through the origin. Thus the proof of \Cref{thm:maincara} shows that we can choose $d_0=1$. In the case $n=3$ the Carath\'eodory number $C$ is thus bounded by $$2d^2\leq C\leq4d(d+1).$$
\end{exm}

Let us examine the ratio of the lower and upper bound from \Cref{thm:maincara} as $d$ goes to infinity:
\begin{equation}\label{eq:hilbertpolynomialLimit}
\frac{P(2d)-k\cdot P(d)+\binom k2}{P(2d)} \quad=\quad 1-k\frac{P(d)}{P(2d)}+\frac{\binom k2}{P(2d)} \quad\xrightarrow{d\to\infty}\quad 1-\frac{k}{2^k}.
\end{equation}
Thus if the dimension $k$ of $\cX$ is not too small, our bounds are rather tight -- at least for large $d$. On the other hand, if $k=1$, i.e., $\cX$ is a smooth algebraic curve, using a refined argument, we obtain bounds that are even better, namely they differ only by one.

\begin{thm}\label{thm:caracurves}
Let $\cX=V_0\subset\rset^n$ be a compact algebraic set of dimension $1$ such that its projective closure $V\subset\mathbb{P}^n$ is a smooth irreducible curve of degree $e$. For large enough $d>0$, every moment functional $L:\rset[\cX]_{\leq 2d}\to\rset$ is a conic combination of at most $d\cdot e+1$ point evaluations $l_{x_i}$ with $x_i\in\cX$. On the other hand, there are moment functionals $L:\rset[\cX]_{\leq 2d}\to\rset$ that are not a conic combination of fewer than $d\cdot e$ point evaluations $l_{x_i}$.
\end{thm}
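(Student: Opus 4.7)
The plan splits into a near-immediate lower bound and a sharpened upper bound. For the lower bound, I would specialize \Cref{thm:maincara} to $k=1$: since $V$ is a smooth irreducible projective curve, Riemann--Roch gives $P(d) = de + 1 - g$ for its arithmetic genus $g$, and hence the lower bound from that theorem simplifies to $P(2d) - P(d) + \binom{1}{2} = de$, which is exactly the desired bound.

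For the upper bound, the bound $HF_V(2d) = 2de + 1 - g$ coming from \Cref{prop:upper} must be improved. I plan to combine a tight count of real zeros of a nonnegative form on $V$ with a reduction to the boundary of the moment cone, in the spirit of \Cref{lem:upperGabs} but simplified by compactness of $\cX$. The key geometric lemma is: any nonzero nonnegative $f \in \rset[\cX]_{\leq 2d}$ has at most $de$ distinct zeros on $\cX$. Its homogenization $F \in \rset[x_0,\ldots,x_n]_{2d}$ cuts out a divisor of degree $2de$ on $V$ by Bezout, and at each zero $p \in \cX$ the smoothness of $V$ together with the pure one-dimensionality of the compact $\cX$ ensures that $\cX$ is a real $1$-manifold locally near $p$, so nonnegativity of $f$ forces the vanishing order of $F|_V$ at $p$ to be even. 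Hence the number of such zeros is at most $\lfloor 2de/2 \rfloor = de$.

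The boundary reduction is then standard: since $1 \in \rset[\cX]_{\leq 2d}$ and $\cX$ is compact, the moment cone $\cS_\sA$ is the conic hull of the compact set $s_\sA(\cX)$, hence closed and pointed. For $L \in \partial \cS_\sA$, duality in the finite-dimensional cone $\cS_\sA$ yields a nonnegative $p$ with $L(p) = 0$, and any representing measure of $L$ must then be supported on $\cZ(p) \cap \cX$, which by the key lemma has at most $de$ points; hence $L$ is a conic combination of at most $de$ point evaluations. For $L \in \inter \cS_\sA$, pick any atom $x_1$ in some Richter representation of $L$ and set $c^* = \sup\{c \geq 0 \,:\, L - c\, l_{x_1} \in \cS_\sA\}$, which is finite (bounded by $L(1)$ upon testing against the constant function $1$) and attained (by closedness of $\cS_\sA$). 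Then $L - c^*\, l_{x_1} \in \partial \cS_\sA$ is representable by at most $de$ atoms, and adding back $c^*\, l_{x_1}$ gives $L$ a representation with at most $de + 1$.

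The principal obstacle is the even-vanishing step in the key lemma; once the (mild) claim that $\cX$ is a real $1$-manifold near each of its points is verified using smoothness of $V$ and the pure dimension of $\cX$ to exclude acnode-type isolated real points, everything else is essentially bookkeeping against \Cref{thm:maincara} and \Cref{prop:upper}.
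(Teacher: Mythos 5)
Your proposal is correct and follows essentially the same route as the paper: the lower bound by specializing \Cref{thm:maincara} to $k=1$ (where the constant term of the Hilbert polynomial cancels and $\binom{1}{2}=0$), and the upper bound by the same B\'ezout/even-multiplicity count showing a nonnegative degree-$2d$ form has at most $d\cdot e$ distinct zeros on the smooth curve, followed by the same subtract-one-atom reduction to the boundary of the closed, pointed moment cone. The only cosmetic difference is that the paper subtracts $c_s(x)\, s_\sA(x)$ for an arbitrary $x\in\cX$ rather than for an atom taken from a Richter representation of $L$.
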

\begin{proof}
The Hilbert polynomial of $V$ is of the form $HP_V(t)=e\cdot t+a$. Thus the lower bound from \Cref{thm:maincara} is just $d\cdot e$. 

In order to prove the upper bound we use a similar technique as in \Cref{lem:upperGabs}. At first we show that a nonnegative polynomial $f$ on $\cX$ of degree $2d$ can have at most $d\cdot e$ different zeros on $\cX$ (or vanishes on all of $\cX$). Indeed, the zero set of $f$ on $\cX$ is contained in $\cV(f^h)\cap V$ where $f^h$ is the homogenization of $f$. Since $\cV(f^h)$ is a hypersurface of degree $2d$ and $V$ a curve of degree $e$, the intersection $\cV(f^h)\cap V$ consists of $2d e$ points counted with multiplicity. But because $f$ is nonnegative on $\cX$, each zero of $f$ on $\cX$ must have even multiplicity as otherwise there would be a sign change. This shows that $f$ has at most $d\cdot e$ different zeros on $\cX$.

Now we show the upper bound $d\cdot e + 1$. Let $\sA$ be a basis of $\rset[\cX]_{\leq 2d}$ and $s\in\cS_\sA$ be the moment sequence of $L$. Since $1\in\rset[\cX]_{\leq 2d}$ and $\cX$ is compact, $\cS_\sA$ is closed and pointed, i.e.,
\[c_s(x) := \sup \{c\geq 0 \,|\, s - c\cdot s_\sA(x)\in\cS_\sA\} < \infty\]
is attained for every $x\in\cX$. Hence, $s' = s - c_s(x)\cdot s_\sA(x)\in\partial\cS_\sA$ and there exists an $f\in\rset[\cX]_{\leq 2d}$ such that $f\geq 0$ on $\cX$ and $L_{s'}(f) = 0$ holds. Since $f$ has at most $d\cdot e$ zeros, $s'$ is represented by at most $d\cdot e$ point evaluations. $s = s' + c_s(x)\cdot s_\sA(x)$ requires therefore at most $d\cdot e + 1$ point evaluations in $\cX$.
\end{proof}

\begin{rem}
As we have seen in \Cref{sec:gaps}, the smoothness assumption in \Cref{thm:caracurves} is crucial.
\end{rem}


In the next section we use the techniques and results from this and the preceding sections to obtain new lower bounds for the cases $\cX = \rset^n$ and $\cX = [0,1]^n$.

\section{Lower Bounds on the Carath\'eodory Number}
\label{sec:cara}

Several lower bounds on the Carath\'eodory number are known, see e.g.\ \cite{davis84}. For bivariate polynomials of odd degree $\cA = \rset[x_1,x_2]_{\leq 2d-1}$ M\"oller \cite{moller76} proved
\[\begin{pmatrix} d+1\\ 2\end{pmatrix} + \left\lfloor\frac{d}{2}\right\rfloor \quad\leq\quad \cat_{\sA_{2,2d-1}}.\]
In \cite{didio17Cara} we gave a very general lower bound improving M\"ollers lower bound to
\[\left\lceil\frac{1}{3}\begin{pmatrix} 2d+1\\ 2\end{pmatrix}\right\rceil \quad\leq\quad \cat_{\sA_{2,2d-1}} \qquad\text{and}\qquad \left\lceil\frac{1}{3}\begin{pmatrix} 2d+2\\ 2\end{pmatrix}\right\rceil \quad\leq\quad \cat_{\sA_{2,2d}}.\]
In \cite{rienerOptima} C.\ Riener and M.\ Schweighofer further improved the lower bound to
\begin{equation}\label{eq:rienerLowerBound}
(d-1)^2 \quad\leq\quad \cat_{\sA_{2,2d-1}}.
\end{equation}
They used \cite[Prop.\ 8.5]{rienerOptima}, a polynomial version of \Cref{thm:caraLowerZeroSet}, applied to $f_1^2+f_2^2$ where
\[f_1(x) = (x-1)(x-2)\cdots (x-d) \quad\text{and}\quad f_2(y) = (y-1)(y-2)\cdots (y-d)\]
and found $\dim \rset[x,y]/(f_1,f_2) = d^2$, i.e.,
$\dim\lin \{s_\sA(x_i,y_j) \,|\, x_i,y_j= 1,\dots,d\} = d^2$
and therefore the moment functional $L:\rset[x,y]_{\leq 2d}\rightarrow\rset$ with $L = \sum_{i,j=1}^d l_{(i,j)}$ has Carath\'eodory number $d^2$. In \cite{didioConeArXiv} this was extended to higher dimensions by investigating the linear (in)dependence of $s_\sA(x_i)$ on the grid $G = \{1,\dots,d\}^n$ (for $\cX = \rset^n$) and $G = \{0,1,\dots,d\}^n$ (for $\cX = [0,d]^n$). As in the previous section the main idea is that the dimension of point evaluations
\begin{equation}\label{eq:pointEvalDim}
\dim\lin\{s_{\sA_{n,d}}(x) \,|\, x\in \cZ(f)\} 
\end{equation}
can be translated into
\begin{equation}\label{eq:moduloDim}
\dim (\rset[x_0,\dots,x_n]/I)_d,
\end{equation}
i.e., the dimension of the homogeneous part of $\rset[x_0,\dots,x_n]/I$ of degree $d$ for some homogeneous ideal $I$.

\begin{lem}\label{lem:pqResults}
Let $n,d\in\nset$ and set
\[p_i = (x_i - x_0)\cdots (x_i - dx_0) \quad\text{and}\quad q_i = x_i(x_i - x_0)\cdots (x_i - dx_0)\]
for $i=1,\dots,n$. The following holds:
\begin{enumerate}[i)]
\item The sequences $p_1,\dots,p_n$ and $q_1,\dots,q_n$ are regular.

\item The ideals generated by $p_1,\dots,p_n$ resp. $q_1,\dots,q_n$ are radical.

\item Let $f_1,\dots,f_n$ be a regular sequence of homogeneous functions $f_i$ of degree $d$. The Hilbert function $HF_{R_n}$ of $R_n := \rset[x_0,\dots,x_n]/(f_1,\dots,f_n)$ is
\[HF_{R_n}(k)=\sum_{i=0}^n (-1)^i\cdot\binom ni \cdot HF_{\pset^n}(k-i\cdot d).\]
In particular, we have
\begin{align*}
HF_{R_n}(2d-2) &= \begin{pmatrix} n+2d-2\\ n\end{pmatrix} - n\cdot \begin{pmatrix} n+d-2\\ n\end{pmatrix},\\
HF_{R_n}(2d-1) &= \begin{pmatrix} n+2d-1\\ n\end{pmatrix} - n\cdot \begin{pmatrix} n+d-1\\ n\end{pmatrix},\\
HF_{R_n}(2d) &= \begin{pmatrix} n+2d\\ n\end{pmatrix} - n\cdot \begin{pmatrix} n+d\\ n\end{pmatrix} + \begin{pmatrix} n\\ 2\end{pmatrix},
\intertext{and}
HF_{R_n}(2d+1) &= \begin{pmatrix} n+2d+1\\ n\end{pmatrix} - n\cdot \begin{pmatrix} n+d+1\\ n\end{pmatrix} + 3\cdot\begin{pmatrix} n+1\\ 3\end{pmatrix}.
\end{align*}
\end{enumerate}
\end{lem}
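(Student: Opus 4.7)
The plan is to prove the three claims in order, exploiting the fact that $p_i$ and $q_i$ lie in the two-variable polynomial ring $\rset[x_0,x_i]$.

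For (i), observe that each $p_i$ (resp.\ $q_i$) is monic as a polynomial in $x_i$ of degree $d$ (resp.\ $d+1$). Since $x_i,x_{i+1},\ldots,x_n$ do not appear in $p_1,\ldots,p_{i-1}$, one can identify
\[
\rset[x_0,\ldots,x_n]/(p_1,\ldots,p_{i-1}) \;=\; B_{i-1}[x_i,x_{i+1},\ldots,x_n],
\]
where $B_{i-1}:=\rset[x_0,\ldots,x_{i-1}]/(p_1,\ldots,p_{i-1})$. In this polynomial ring over $B_{i-1}$, $p_i$ is a polynomial in $x_i$ with leading coefficient $1$, hence a non-zero-divisor. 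The same reasoning handles the $q_i$'s.

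For (ii), I will embed the quotient into a product of reduced rings. Since $\rset[x_0,\ldots,x_n]$ is a UFD and $p_1=\prod_{k=1}^d(x_1-kx_0)$ factors into pairwise coprime irreducibles, $(p_1)=\bigcap_{k=1}^d(x_1-kx_0)$, yielding the canonical injection
\[
\rset[x_0,\ldots,x_n]/(p_1) \;\hookrightarrow\; \prod_{k=1}^d \rset[x_0,\ldots,x_n]/(x_1-kx_0) \;\cong\; \prod_{k=1}^d \rset[x_0,x_2,\ldots,x_n].
\]
Each factor is again a polynomial ring and hence a UFD in which $p_2$ still factors into pairwise coprime linear forms, so iterating gives an injection
\[
\rset[x_0,\ldots,x_n]/(p_1,\ldots,p_n) \;\hookrightarrow\; \prod_{(k_1,\ldots,k_n)\in\{1,\ldots,d\}^n}\rset[x_0].
\]
The right-hand side is a product of integral domains, hence reduced, so $(p_1,\ldots,p_n)$ is radical. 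An analogous argument with one additional linear factor at each step handles the $q_i$'s.

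For (iii), by (i) we may apply \Cref{lem:hilfun} with $R=\rset[x_0,\ldots,x_n]$, giving
\[
HF_{R_n}(k)=\sum_{i=0}^n (-1)^i\binom{n}{i}HF_{\pset^n}(k-id),
\]
where $HF_{\pset^n}(m)=\binom{n+m}{n}$ for $m\geq 0$ and $0$ for $m<0$. For $k=2d-2$ and $k=2d-1$ only $i=0,1$ contribute. For $k=2d$ the $i=2$ term contributes $\binom{n}{2}\cdot HF_{\pset^n}(0)=\binom{n}{2}$. For $k=2d+1$ the $i=2$ term contributes $\binom{n}{2}\cdot HF_{\pset^n}(1)=\binom{n}{2}(n+1)=3\binom{n+1}{3}$. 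All higher $i$ are killed by $k-id<0$ (for $d\geq 2$).

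The main obstacle is (ii): a naive Chinese Remainder attempt in the graded ring fails because the ideals $(x_1-kx_0)$ and $(x_1-k'x_0)$ share the irrelevant prime $(x_0,x_1)$ and so are not coprime. The embedding trick bypasses this, using only the always-valid injection $R/\bigcap I_\alpha\hookrightarrow\prod R/I_\alpha$ together with the UFD identity $(p)=\bigcap(a_i)$ for any factorization $p=\prod a_i$ into pairwise coprime elements. The binomial bookkeeping in (iii) is then routine.
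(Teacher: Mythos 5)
Parts (i) and (iii) of your argument are correct and essentially coincide with the paper's proof: (i) is the same monic-in-$x_i$ observation, and (iii) is the same direct application of \Cref{lem:hilfun} with $HF_{\pset^n}(m)=\binom{n+m}{n}$ for $m\geq0$ and $0$ otherwise; your accounting of which terms survive is right, and your parenthetical ``for $d\geq2$'' in the $k=2d+1$ case is a legitimate observation (for $d=1$ the $i=3$ term does not vanish).

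The gap is in (ii), at the words ``so iterating gives an injection.'' The first step, $(p_1)=\bigcap_k(x_1-kx_0)$ and hence $R/(p_1)\hookrightarrow\prod_k R/(x_1-kx_0)$ for $R=\rset[x_0,\dots,x_n]$, is fine. But an injection of rings does not stay injective after passing to quotients: from $A\hookrightarrow B$ you cannot conclude $A/p_2A\hookrightarrow B/p_2B$. To iterate you would need $(p_1,p_2)=\bigcap_k\bigl((x_1-kx_0)+(p_2)\bigr)$, and sums of ideals do not distribute over intersections; only $\bigl(\bigcap_k I_k\bigr)+J\subseteq\bigcap_k(I_k+J)$ is automatic, and it is the reverse inclusion you need. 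The identity you are implicitly using, namely $(p_1,\dots,p_n)=\bigcap_{(k_1,\dots,k_n)}(x_1-k_1x_0,\dots,x_n-k_nx_0)$, is true and immediately gives radicality, but it is exactly the content to be proved. One way to supply it: for a homogeneous $f$ in the intersection, divide successively by the $p_i$ (each monic in $x_i$ over the remaining variables) to get a homogeneous remainder $r$ with $\deg_{x_i}r<d$ for all $i$; vanishing of $r$ under every substitution $x_i\mapsto k_ix_0$ then forces $r=0$ because the tensor product of the Vandermonde matrices on the nodes $1,\dots,d$ is invertible (its determinant is a nonzero multiple of a power of $x_0$, a non-zero-divisor at each stage). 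The paper instead cites Alon's Combinatorial Nullstellensatz \cite[Thm.~1.1]{alon99}, whose degree bounds are what allow the affine statement on the grid to be homogenized; your argument, once the missing step is filled in, amounts to reproving that result in this special case.
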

\begin{proof} Part (i) follows directly from the fact that each $p_i$ resp.\ $q_i$ is a monic polynomials over $\rset[x_0]$ in the single variable $x_i$. Part (ii) is a direct consequence of \cite[Thm. 1.1]{alon99}. Finally, since $HF_{\pset^n}(k)=\binom{n+k}{k}$ for $k\geq0$ and $HF_{\pset^n}(k)=0$ otherwise, \Cref{lem:hilfun} directly implies (iii).
\end{proof}

From this lemma we derive the following lower bounds for the Carath\'eodory number $\cat_{\sA_{n,2d}}$ and $\cat_{\sA_{n,2d+1}}$ on $\cX = \rset^n$.

\begin{thm}\label{thm:lowerboundsRn}
Let $n,d\in\nset$ and $\cX\subseteq\rset^n$ with non-empty interior. For even degree $\cA = \rset[x_1,\dots,x_n]_{\leq 2d}$  we have
\[\cat_{\sA_{n,2d}} \quad\geq\quad \begin{pmatrix} n+2d\\ n\end{pmatrix} - n\cdot \begin{pmatrix} n+d\\ n\end{pmatrix} + \begin{pmatrix} n\\ 2\end{pmatrix}\]
and for odd degree $\cA = \rset[x_1,\dots,x_n]_{\leq 2d+1}$ we have
\[\cat_{\sA_{n,2d+1}} \quad\geq\quad \begin{pmatrix} n+2d+1\\ n\end{pmatrix} - n\cdot \begin{pmatrix} n+d+1\\ n\end{pmatrix} + 3\cdot \begin{pmatrix} n+1\\ 3\end{pmatrix}.\]
\end{thm}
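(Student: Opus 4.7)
The plan is to apply \Cref{thm:caraLowerZeroSet} to a single, carefully chosen nonnegative polynomial and to identify the resulting dimension of point evaluations with the Hilbert function computed in \Cref{lem:pqResults}(iii). Since $\cX$ has non-empty interior, I first pick $d$ distinct real numbers $t_1<\dots<t_d$ such that the grid $G:=\{t_1,\dots,t_d\}^n$ is entirely contained in $\cX$; by compactness this only requires choosing the $t_j$ inside an open cube contained in $\cX$, and it does not affect any of the dimension-theoretic statements below. Define the degree-$d$ univariate polynomials $\tilde p_i(x):=(x_i-t_1)(x_i-t_2)\cdots(x_i-t_d)$ for $i=1,\dots,n$, and set
\[a(x):=\tilde p_1(x)^2+\cdots+\tilde p_n(x)^2.\]
Then $a\in\rset[x_1,\dots,x_n]_{\leq 2d}\subset\rset[x_1,\dots,x_n]_{\leq 2d+1}$, $a\geq0$ on $\rset^n$, and $\cZ(a)=G\subset\cX$ is finite. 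The moment functional $L:=\sum_{x\in G} l_x$ is a (conic) moment functional and satisfies $L(a)=0$, so \Cref{thm:caraLowerZeroSet} gives
\[\cat_{\sA_{n,e}}\;\geq\;\cat_{\sA_{n,e}}(L)\;=\;\dim\lin\{s_{\sA_{n,e}}(x)\,|\,x\in G\}\]
for both $e=2d$ and $e=2d+1$.

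Next I identify the right-hand dimension with a Hilbert function using the dehomogenization isomorphism from the Preliminaries. Let $I\subset\rset[x_1,\dots,x_n]$ be the vanishing ideal of $G$. Since $G$ is a product of finite subsets of the coordinate axes, $I=(\tilde p_1,\dots,\tilde p_n)$. Homogenizing yields the homogeneous ideal $I^h=(\tilde p_1^h,\dots,\tilde p_n^h)\subset\rset[x_0,\dots,x_n]$, where $\tilde p_i^h=\prod_{j=1}^{d}(x_i-t_j x_0)$ is homogeneous of degree $d$. By the same monic-in-$x_i$ argument as in \Cref{lem:pqResults}(i) and by \cite[Thm.~1.1]{alon99} as in \Cref{lem:pqResults}(ii), the sequence $\tilde p_1^h,\dots,\tilde p_n^h$ is regular in $\rset[x_0,\dots,x_n]$ and generates a radical ideal; its zero set in $\pset^n$ is precisely the projective closure of $G$, so $I^h$ equals this ideal on the nose. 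The dehomogenization isomorphism then gives
\[\dim\lin\{s_{\sA_{n,e}}(x)\,|\,x\in G\}\;=\;\dim\bigl(\rset[x_1,\dots,x_n]/I\bigr)_{\leq e}\;=\;HF_{R_n}(e),\]
where $R_n=\rset[x_0,\dots,x_n]/(\tilde p_1^h,\dots,\tilde p_n^h)$ is exactly the ring considered in \Cref{lem:pqResults}(iii) (with a regular sequence of degree $d$).

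Finally I read off the values for $e=2d$ and $e=2d+1$ directly from the explicit formulas in \Cref{lem:pqResults}(iii):
\[HF_{R_n}(2d)\;=\;\binom{n+2d}{n}-n\binom{n+d}{n}+\binom{n}{2},\]
\[HF_{R_n}(2d+1)\;=\;\binom{n+2d+1}{n}-n\binom{n+d+1}{n}+3\binom{n+1}{3},\]
which yield the two claimed lower bounds.

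The main obstacle I expect is bookkeeping, not ideas: one must argue cleanly that the grid can be placed inside $\cX$ without changing any of the algebraic-geometric quantities (which is immediate since the $\tilde p_i^h$ remain a regular sequence for any choice of distinct $t_j$), and one must justify the equality $I^h=(\tilde p_1^h,\dots,\tilde p_n^h)$ rather than just an inclusion. The latter follows because the right-hand side is radical of the correct projective zero set, but it is the one point where some care is required before quoting \Cref{lem:pqResults}.
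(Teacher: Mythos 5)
Your proposal is correct and takes essentially the same route as the paper: the paper reduces to the standard grid $\{1,\dots,d\}^n$ via an affine change of variables and then applies \Cref{thm:caraLowerZeroSet} to $p_1^2+\cdots+p_n^2$ exactly as in the proof of \Cref{thm:maincara}, reading off the dimension from \Cref{lem:pqResults}(iii); your variant with a general grid $\{t_1,\dots,t_d\}^n$ and the explicit verification that $I^h=(\tilde p_1^h,\dots,\tilde p_n^h)$ is the same argument with the bookkeeping done in place rather than transported by the affine map.
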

\begin{proof}
Since $\cX\subseteq\rset^n$ has non-empty interior there is a $\varepsilon > 0$ and $y\in\rset^n$ such that $y + \varepsilon\cdot \{1,\dots,d\}^n\subset\cX$. The affine map
$T:\cX'\rightarrow\cX,\ x\mapsto y+\varepsilon\cdot x$
shifts the moment problem on $\cX$ to $\cX' = \varepsilon^{-1}\cdot (\cX-y)$ with $\rset[x_1,\dots,x_n]_{\leq dD} = \rset[x_1,\dots,x_n]_{\leq D}\circ T$, $D = 2d, 2d+1$, and $\{1,\dots,d\}^n\subset\cX'$. So w.l.o.g.\ we can assume that $\{1,\dots,d\}^n\subset\cX$. Then we can proceed as in the proof of \Cref{thm:maincara} by choosing the $f_i$ to be the $p_i$ from \Cref{lem:pqResults}. We have already calculated the concrete resulting values of the Hilbert function in \Cref{lem:pqResults}.
\end{proof}

These lower bounds coincide with the numerical results in \cite[Tab.\ 2]{didioConeArXiv}. Note that for $n=1$ we get for the even and odd degree cases the bound $d$. This is the maximal number of zeros of a non-zero and non-negative univariate polynomial, i.e., the Carath\'eodory number of moment sequences on the boundary of the moment cone $\cS_{\sA_{n,2d}}$ or $\cS_{\sA_{n,2d+1}}$, respectively. In fact, we proved the following.

\begin{prop}\label{prop:momentOnBoundary}
Let $n,d\in\nset$, $k\in\{0,1\}$, $\cX=\rset^n$, and $G=\{1,\dots,d\}^n$. Then
\[s = \sum_{x\in G} s_{\sA_{n,2d+k}}(x) \qquad\text{resp.}\qquad L = \sum_{x\in G} l_x:\rset[x_1,\dots,x_n]_{\leq 2d+k}\rightarrow\rset\]
supported on the grid $G$ with $L(p)=0$, $p=p_1^2+ \cdots + p_n^2\geq 0$ from \Cref{lem:pqResults}, and the representing measure $\mu = \sum_{x\in G} \delta_x$ has the Carath\'eodory number
\[\cat_{\sA_{n,2d+k}}(s) = \begin{cases}
\left(\begin{smallmatrix} n+2d\\ n\end{smallmatrix}\right) - n\cdot \left(\begin{smallmatrix} n+d\\ n\end{smallmatrix}\right) + \left(\begin{smallmatrix} n\\ 2\end{smallmatrix}\right) & \text{for}\ k=0,\phantom{\Big(}\\
\left(\begin{smallmatrix} n+2d+1\\ n\end{smallmatrix}\right) - n\cdot \left(\begin{smallmatrix} n+d+1\\ n\end{smallmatrix}\right) + 3\cdot \left(\begin{smallmatrix} n+1\\ 3\end{smallmatrix}\right) & \text{for}\ k=1.\phantom{\Big(} \end{cases}\]
\end{prop}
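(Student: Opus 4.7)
My plan is to establish $\cat_{\sA_{n,2d+k}}(s) = HF_{R_n}(2d+k)$ by matching lower and upper bounds, and then read off the explicit closed form from \Cref{lem:pqResults}(iii).

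For the lower bound I would apply \Cref{thm:caraLowerZeroSet} to $p = p_1^2+\cdots+p_n^2$. Since the dehomogenization $p_i(1,x_1,\dots,x_n) = (x_i-1)(x_i-2)\cdots(x_i-d)$ vanishes exactly when $x_i\in\{1,\dots,d\}$, one has $p\geq 0$, $\cZ(p)=G$ finite, and $L(p)=\sum_{x\in G}p(x)=0$. The theorem then yields $\cat_{\sA_{n,2d+k}}(s)\geq \dim\lin\{s_{\sA_{n,2d+k}}(x):x\in G\}$. By the homogenization isomorphism recalled in the preliminaries, this dimension equals $HF_{I^h}(2d+k)$, where $I^h$ is the homogeneous vanishing ideal of $G\subset\pset^n$. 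The identification $I^h=(p_1,\dots,p_n)$ follows from radicality (\Cref{lem:pqResults}(ii)) combined with the observation that at $x_0=0$ each $p_i$ reduces to $x_i^d$, so the projective zero set contains no point at infinity and is exactly $G$; the Nullstellensatz then closes the identification. Plugging $f_i=p_i$ of degree $d$ into \Cref{lem:pqResults}(iii) evaluates the Hilbert function to the asserted closed form for each parity $k\in\{0,1\}$.

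For the upper bound, since any representing atomic measure of $s$ must be supported on $\cZ(p)=G$, it suffices to write $L$ as a positive combination of the $d^n$ point evaluations $\{l_x:x\in G\}\subset \rset[x_1,\dots,x_n]_{\leq 2d+k}^*$. These evaluations span a subspace of dimension $HF_{R_n}(2d+k)$ by the computation above, so Carath\'eodory's theorem for conic combinations in a finite-dimensional real vector space expresses $L$ as a positive combination of at most $HF_{R_n}(2d+k)$ of them, giving $\cat_{\sA_{n,2d+k}}(s)\leq HF_{R_n}(2d+k)$.

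The main technical point is the identification $I^h=(p_1,\dots,p_n)$ of the homogeneous vanishing ideal of $G$ in $\pset^n$ with the complete intersection cut out by the chosen forms; the rest---nonnegativity of $p$, the equality $\cZ(p)=G$, and the Carath\'eodory argument for the upper bound---is routine once this identification is in place, and \Cref{lem:pqResults}(iii) then supplies the explicit formulas for both parities $k\in\{0,1\}$.
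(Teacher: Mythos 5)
Your proposal is correct and takes essentially the same route as the paper: the paper obtains the exact value directly from the equality in \Cref{thm:caraLowerZeroSet} applied to $p=p_1^2+\cdots+p_n^2$ together with the Hilbert-function computation of \Cref{lem:pqResults}, while you merely unpack that equality into a lower bound (same theorem) and an upper bound (support of any representing measure in $\cZ(p)=G$ plus Carath\'eodory for cones). Your identification of the homogeneous vanishing ideal of $G$ with $(p_1,\dots,p_n)$ via radicality, the absence of zeros at infinity, and the Nullstellensatz is precisely the step the paper uses implicitly, so the two arguments coincide in substance.
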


We get the following lower bounds for the case $\cX = [0,1]^n$ (or equivalently $\cX = [0,d]^n$) which serves as an example of a compact set $\cX$.

\begin{thm}\label{thm:lowerboundsCube}
Let $n,d\in\nset$ and $\cX = [0,1]^n$. For even degree $\cA = \rset[x_1,\dots,x_n]_{\leq 2d}$ we have
\[\cat_{\sA_{n,2d}} \quad\geq\quad \begin{pmatrix} n+2d\\ n\end{pmatrix} - n\cdot \begin{pmatrix} n+d-1\\ n\end{pmatrix}\]
and for odd degree $\cA = \rset[x_1,\dots,x_n]_{\leq 2d+1}$ we have
\[\cat_{\sA_{n,2d+1}} \quad\geq\quad \begin{pmatrix} n+2d+1\\ n\end{pmatrix} - n\cdot \begin{pmatrix} n+d\\ n\end{pmatrix}.\]
\end{thm}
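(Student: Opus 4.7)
The strategy parallels \Cref{thm:lowerboundsRn}, with two modifications exploiting the compactness of the cube. First, the affine bijection $x \mapsto d\cdot x$ from $[0,1]^n$ onto $[0,d]^n$ is degree-preserving on polynomials, so the Carath\'eodory numbers agree and it suffices to prove the bounds for $\cX = [0,d]^n$. Second, I replace the grid $\{1,\dots,d\}^n$ by the larger grid $G = \{0,1,\dots,d\}^n \subset \cX$, whose natural homogeneous vanishing polynomials are $q_i = x_i(x_i-x_0)\cdots(x_i-dx_0)$ from \Cref{lem:pqResults}.

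The difficulty is that $q_i$ has degree $d+1$, so $\sum_i q_i^2$ has degree $2d+2$, too large for $\rset[x_1,\dots,x_n]_{\leq 2d}$. The compactness of the cube saves the day: in the even case I take
\[a(x) \;=\; \sum_{i=1}^n x_i(d-x_i)\prod_{j=1}^{d-1}(x_i-j)^2,\]
which has degree $2d$, is non-negative on $\cX$ (each summand is non-negative on $[0,d]$), and whose zero set on $\cX$ is exactly $G$. For the odd case
\[a(x) \;=\; \sum_{i=1}^n x_i^2(d-x_i)\prod_{j=1}^{d-1}(x_i-j)^2\]
has degree $2d+1$ and the same properties. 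Setting $s = \sum_{x\in G} s_{\sA_{n,D}}(x) \in \cS_{\sA_{n,D}}$ for $D \in \{2d,2d+1\}$, one has $L_s(a) = 0$, so \Cref{thm:caraLowerZeroSet} gives
\[\cat_{\sA_{n,D}} \;\geq\; \dim \lin\{s_{\sA_{n,D}}(x) : x \in G\}.\]

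By the dehomogenization isomorphism recalled in Section 2.2, this dimension equals $HF_{I^h}(D)$, where $I^h \subset \rset[x_0,\dots,x_n]$ is the homogeneous vanishing ideal of $G \subset \pset^n$. The step I expect to need the most care is the identification $I^h = (q_1,\dots,q_n)$: both are homogeneous radical ideals (the second by \Cref{lem:pqResults}(ii)) with the same projective zero locus $G$, since $x_0 \neq 0$ forces $x_i/x_0 \in \{0,1,\dots,d\}$ while $x_0 = 0$ forces $x_i^{d+1} = 0$ for every $i$, which is not a projective point; the Nullstellensatz then yields equality. Finally, \Cref{lem:pqResults}(iii) applied to the regular sequence $q_1,\dots,q_n$ of common degree $d+1$ gives
\[HF_{I^h}(D) \;=\; \sum_{i=0}^n (-1)^i \binom{n}{i}\, HF_{\pset^n}\bigl(D - i(d+1)\bigr),\]
in which only the terms $i=0,1$ are nonzero for $D \in \{2d, 2d+1\}$, because the $i=2$ contribution involves $\binom{n-2}{n}$ or $\binom{n-1}{n}$, both of which vanish. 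This leaves exactly $\binom{n+2d}{n} - n\binom{n+d-1}{n}$ in the even case and $\binom{n+2d+1}{n} - n\binom{n+d}{n}$ in the odd case, as claimed.
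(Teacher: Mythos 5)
Your proof is correct and follows essentially the same route as the paper: the grid $\{0,1,\dots,d\}^n$, the polynomials $q_i$ from \Cref{lem:pqResults}, and the Hilbert function computation via \Cref{lem:hilfun} and \Cref{thm:caraLowerZeroSet}. In fact you supply a detail the paper's two-line proof leaves implicit: since $\sum_i q_i^2$ has degree $2d+2$ and hence does not lie in $\rset[x_1,\dots,x_n]_{\leq 2d}$, one really does need a nonnegative certificate of the correct degree, and your polynomials $\sum_i x_i(d-x_i)\prod_{j=1}^{d-1}(x_i-j)^2$ (resp.\ the odd-degree variant) exploit compactness exactly as the paper's discussion of boundary zeros after the theorem suggests.
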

\begin{proof}
The proof follows the same arguments as in \Cref{thm:lowerboundsRn}. Since we work on $[0,1]^n$ we can choose the $f_i$'s to be the $q_i$'s as in \Cref{thm:maincara}. Then \Cref{lem:pqResults} provides the explicit values of the Carath\'eodory numbers.
\end{proof}

Additionally, note the difference between the lower bounds from \Cref{thm:lowerboundsRn} and \Cref{thm:lowerboundsCube}. In the one-dimensional case a non-negative polynomial $p$ of degree $2d$ has at most $d$ zeros by the fundamental theorem of algebra:
\[p(x) = (x-x_1)^2\cdots (x-x_d)^2.\]
However, on the interval $[0,1]$ a non-negative polynomial $q$ of degree $2d$ can have up to $d+1$ zeros
\[q(x) = (x-x_1)\cdot (x-x_2)^2\cdots (x-x_{d-1})^2\cdot (x_d-x)\]
when $x_1 = 0$ and $x_d = 1$ holds. So interior zeros count twice, zeros on the boundary only once. This concept appeared already in the classical works of Kre\u{\i}n and Nudel'man about $T$-systems, see \cite[Ch.\ 2]{kreinMarkovMomentProblem}. So in higher dimensions for $\rset^n$ all zeros are interior points but for $[0,1]^n$ we can place zeros on the boundary.

Note that for $n=1$ we get for the even and the odd case the lower bound $d+1$. This is the maximal number of zeros of a non-zero and non-negative polynomial on $[0,1]$. For $n=2$ we get the following.

\begin{cor}
For $d\in\nset$ and $\cX = [0,1]^2$ ($n=2$) we have
\[\cat_{\sA_{2,2d}} \;\geq\; (d+1)^2 \qquad\text{and}\qquad \cat_{\sA_{2,2d+1}} \;\geq\; (d+1)^2.\]
\end{cor}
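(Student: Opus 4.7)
The plan is essentially to apply \Cref{thm:lowerboundsCube} with $n=2$ and then simplify the resulting binomial expressions. Since \Cref{thm:lowerboundsCube} is already stated, nothing new needs to be proved; the only work is algebraic and the main obstacle is simply not mis-computing a binomial coefficient.

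For the even-degree case I would substitute $n=2$ into the lower bound $\binom{n+2d}{n}-n\binom{n+d-1}{n}$ to get
\[\binom{2d+2}{2}-2\binom{d+1}{2}=(d+1)(2d+1)-d(d+1)=(d+1)\bigl((2d+1)-d\bigr)=(d+1)^2.\]
For the odd-degree case I would substitute $n=2$ into $\binom{n+2d+1}{n}-n\binom{n+d}{n}$, giving
\[\binom{2d+3}{2}-2\binom{d+2}{2}=(d+1)(2d+3)-(d+1)(d+2)=(d+1)\bigl((2d+3)-(d+2)\bigr)=(d+1)^2.\]

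Both evaluations yield the same number $(d+1)^2$, so both inequalities follow immediately from \Cref{thm:lowerboundsCube}. No additional machinery, constructions, or auxiliary lemmas are needed; the corollary is purely a numerical specialization of the preceding theorem, and the only thing to watch out for is the symmetry that makes the even and odd bounds coincide in the particular case $n=2$.
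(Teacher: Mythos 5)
Your computation is correct and matches exactly what the paper intends: the corollary is the specialization $n=2$ of \Cref{thm:lowerboundsCube}, and both binomial evaluations indeed simplify to $(d+1)^2$. Nothing further is needed.
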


Theorems \ref{thm:lowerboundsRn} and \ref{thm:lowerboundsCube} give lower bounds on the Carath\'eodory number of $\cS_{\sA_{n,k}}$ by constructing one specific boundary moment sequence $s$ and calculating its Cara\-th\'eo\-dory number $\cat_{\sA_{n,k}}(s)$. But from the following considerations it will be clear that Theorems \ref{thm:lowerboundsRn} and \ref{thm:lowerboundsCube} already show that in higher dimensions and degrees the Carath\'eodory numbers behave very badly, see \Cref{thm:lowerboundLimits}. Previous results in \cite{didio17Cara} and \cite{rienerOptima} show that for $n=2$ we have
\begin{equation}\label{eq:limitsQuotTwoDim}
\frac{1}{2} \quad\leq\quad \liminf_{d\rightarrow\infty} \frac{\cat_{\sA_{2,d}}}{|\sA_{2,d}|}\quad\leq\quad \limsup_{d\rightarrow\infty} \frac{\cat_{\sA_{2,d}}}{|\sA_{2,d}|} \quad\leq\quad \frac{3}{4}.
\end{equation}
From Theorems \ref{thm:lowerboundsRn} and \ref{thm:lowerboundsCube} we get the following limits.

\begin{thm}\label{thm:lowerboundLimits}
For $\cX\subseteq\rset^n$ with non-empty interior we have
\begin{align*}
\liminf_{d\rightarrow\infty} \frac{\cat_{\sA_{n,d}}}{|\sA_{n,d}|}\quad &\geq\quad 1-\frac{n}{2^n} & \textrm{ for all } n &\in\nset\\
\intertext{and}
\lim_{n\rightarrow\infty} \frac{\cat_{\sA_{n,d}}}{|\sA_{n,d}|}\quad &= \quad 1 &\textrm{ for all } d&\in\nset.
\end{align*}
\end{thm}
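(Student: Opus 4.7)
The plan is to deduce both statements from the explicit lower bounds in \Cref{thm:lowerboundsRn} together with the trivial upper bound $\cat_{\sA_{n,d}}\le|\sA_{n,d}|=\binom{n+d}{n}$ coming from Richter's \Cref{thm:richter}. Both reduce to an elementary asymptotic calculation with binomial coefficients once we split $d$ by parity.

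For the first inequality I would fix $n$ and split the sequence indexed by $d$ into the subsequences $d=2k$ and $d=2k+1$. In the even case, \Cref{thm:lowerboundsRn} yields
\[
\frac{\cat_{\sA_{n,2k}}}{\binom{n+2k}{n}}\;\ge\;1-n\cdot\frac{\binom{n+k}{n}}{\binom{n+2k}{n}}+\frac{\binom{n}{2}}{\binom{n+2k}{n}}.
\]
Since $\binom{n+m}{n}$ is, for fixed $n$, a polynomial in $m$ of degree $n$ with leading coefficient $1/n!$, one has $\binom{n+k}{n}/\binom{n+2k}{n}\to 1/2^n$ and $\binom{n}{2}/\binom{n+2k}{n}\to 0$ as $k\to\infty$. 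The odd subsequence $d=2k+1$ is handled identically, using the lower bound with correction $3\binom{n+1}{3}$ in place of $\binom{n}{2}$, and produces the same asymptotic $1-n/2^n$. Taking $\liminf$ over the union of both subsequences then gives the stated bound.

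For the second statement I would fix $d$ and let $n\to\infty$. Richter's theorem already provides $\cat_{\sA_{n,d}}/|\sA_{n,d}|\le 1$, so it suffices to show the corresponding lower-bound ratio converges to $1$. The key estimate in the even case $d=2k$ is
\[
\frac{\binom{n+k}{n}}{\binom{n+2k}{n}}\;=\;\prod_{j=1}^{k}\frac{k+j}{n+k+j}\;=\;O(n^{-k})\quad\text{as } n\to\infty,
\]
so $n\cdot\binom{n+k}{n}/\binom{n+2k}{n}=O(n^{1-k})\to 0$ whenever $k\ge 2$, i.e., $d\ge 4$. The odd case $d=2k+1$ with $k\ge 2$ is analogous via the companion product identity. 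The borderline degree $d=3$ requires a direct computation: expanding gives
\[
\binom{n+3}{3}-n\binom{n+2}{2}+3\binom{n+1}{3}\;=\;\tfrac{1}{6}(n+1)(n^2-4n+6),
\]
so the ratio to $\binom{n+3}{3}=(n+1)(n+2)(n+3)/6$ equals $(n^2-4n+6)/((n+2)(n+3))$, which also tends to $1$.

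The only genuine obstacle is bookkeeping rather than depth: in the borderline degree $d=3$ the two non-constant contributions $n\binom{n+2}{2}/\binom{n+3}{3}$ and $3\binom{n+1}{3}/\binom{n+3}{3}$ individually tend to $3$, and only their difference converges. Rather than develop a unified asymptotic framework I would simply handle this single degree by hand as above, and for all larger $d$ (odd or even, $d\ge 4$) the product representation immediately forces the ratio to $1$.
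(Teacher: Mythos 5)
Your route is the same as the paper's: the printed proof consists of the single sentence ``follows by a direct calculation as in \Cref{eq:hilbertpolynomialLimit}'', i.e.\ divide the lower bounds of \Cref{thm:lowerboundsRn} by $|\sA_{n,d}|=\binom{n+d}{n}$ and pass to the limit, which is precisely what you do. Your handling of the $\liminf_{d\to\infty}$ statement (parity split, leading-coefficient comparison giving $\binom{n+k}{n}/\binom{n+2k}{n}\to 2^{-n}$, and likewise in the odd case) is correct, and so are the product estimate for $d\ge4$ and the explicit computation at $d=3$; in this respect you are considerably more careful than the paper.

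There is, however, a gap in your treatment of the second statement: you only cover $d\ge3$, and the missing degrees $d=1,2$ are not routine bookkeeping. For $d=2$ (even case $k=1$) the lower bound of \Cref{thm:lowerboundsRn} collapses to $\binom{n+2}{2}-n(n+1)+\binom{n}{2}=1$ --- unsurprisingly, since the underlying grid $\{1,\dots,k\}^n=\{1\}^n$ of \Cref{prop:momentOnBoundary} is a single point --- so the method yields nothing; for $d=1$ the construction is vacuous. In fact the assertion itself fails for these degrees: every moment functional on $\rset[x_1,\dots,x_n]_{\le1}$ with $L(1)>0$ equals $L(1)\,l_{\bar x}$ for its barycenter $\bar x$, so $\cat_{\sA_{n,1}}/|\sA_{n,1}|=1/(n+1)\to0$, and a rank factorization of the $(n+1)\times(n+1)$ Hankel matrix gives $\cat_{\sA_{n,2}}\le n+1$ while $|\sA_{n,2}|$ grows quadratically in $n$. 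So the $n\to\infty$ limit is $1$ only for $d\ge3$, and your write-up should state this restriction explicitly rather than pass over the small degrees in silence. (The defect originates in the theorem as stated, and the paper's one-line proof inherits it; note that \Cref{thm:flatExtensionBound}(ii) invokes this limit for degree $2d$ including $d=1$.)
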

\begin{proof}
Follows by a direct calculation as in \Cref{eq:hilbertpolynomialLimit}.
\end{proof}

In (\ref{eq:limitsQuotTwoDim}), i.e., \cite{didio17Cara} and \cite{rienerOptima}, we have seen that for $n=2$ the upper bound on the Carath\'eodory number is considerably smaller than $|\sA_{2,d}|$, namely $\frac{3}{4}\cdot |\sA_{2,d}|$ is an upper bound. But Theorems \ref{thm:lowerboundsRn}, \ref{thm:lowerboundsCube}, and \ref{thm:lowerboundLimits} confirm the apprehensions in \cite{didioConeArXiv} on the Carath\'eodory numbers and their limits. Note that for $\cX = [0,1]^n$ the following was proved already in \cite{didioConeArXiv}.

\begin{thm}[{\cite[Thm.\ 59]{didioConeArXiv}}]
For $\rset[x_1,\dots,x_n]_{\leq 2}$ on $\cX = [0,1]^n$ we have
\[\begin{pmatrix} n+2\\ 2\end{pmatrix} - n \quad\leq\quad \cat_{\sA_{n,2}} \quad\leq\quad \begin{pmatrix}n+2\\ 2\end{pmatrix} - 1.\]
\end{thm}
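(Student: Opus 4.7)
The plan is to establish the two inequalities separately, each via a direct specialization of general results from earlier in this section.

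For the lower bound, I would specialize \Cref{thm:lowerboundsCube} to $d=1$. Plugging into the even-degree formula gives
\[\binom{n+2}{n} - n\cdot \binom{n}{n} \;=\; \binom{n+2}{2} - n,\]
which matches the claim. Unwinding that specialization, the witness functional is the one built from the regular sequence $q_i = x_i(x_i - x_0)$ from \Cref{lem:pqResults}, whose common real zero set is the vertex set $\{0,1\}^n \subset [0,1]^n$. Applying \Cref{thm:caraLowerZeroSet} to the nonnegative dehomogenized polynomial $\sum_i q_i(1,x)^2$ and then reading off the dimension count from \Cref{lem:pqResults}(iii) with $k=2$ finishes this direction.

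For the upper bound, Richter's \Cref{thm:richter} immediately yields $\cat_{\sA_{n,2}} \leq \dim \rset[x_1,\dots,x_n]_{\leq 2} = \binom{n+2}{2}$. To shave the last atom I would reuse the path-connectedness argument from the proof of \Cref{prop:upper}: since $[0,1]^n$ is path-connected the image $s_{\sA_{n,2}}([0,1]^n)$ has a single path-connected component, and because this is strictly less than $\binom{n+2}{2}$ the hypotheses of \cite[Thm.\ 12]{didio18gaussian} are satisfied, giving the improved bound $\binom{n+2}{2}-1$.

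The main thing that needs care is verifying the hypotheses of the cited topological theorem — compactness of the support (clear from $\cX = [0,1]^n$), closedness and pointedness of $\cS_{\sA_{n,2}}$ (which follows because $1\in\rset[x_1,\dots,x_n]_{\leq 2}$ is strictly positive on the compact $\cX$), and continuity of $s_{\sA_{n,2}}$ (obvious). Once these are in hand the upper bound is immediate, and the lower bound is essentially a one-line substitution into a theorem already proved, so no genuine obstacle is expected beyond bookkeeping.
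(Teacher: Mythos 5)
The paper offers no proof of this statement---it is quoted verbatim from \cite[Thm.\ 59]{didioConeArXiv}---and your route (specialize \Cref{thm:lowerboundsCube} to $d=1$ for the lower bound, apply \Cref{prop:upper} for the upper bound) is exactly the natural derivation from the surrounding results; the upper-bound half is fine as written. There is, however, one concrete slip in your unwinding of the lower bound: the polynomial $\sum_i q_i(1,x)^2=\sum_i x_i^2(x_i-1)^2$ has degree $4$, so it does \emph{not} lie in $\cA=\rset[x_1,\dots,x_n]_{\leq 2}$, and \Cref{thm:caraLowerZeroSet} (which requires $a\in\cA$) cannot be applied to it. On $\cX=[0,1]^n$ the correct degree-$2$ certificate is $a(x)=\sum_{i=1}^n x_i(1-x_i)$, which is nonnegative on the cube and vanishes there exactly on the vertex set $\{0,1\}^n$; this is precisely the ``boundary zeros count once'' phenomenon the paper highlights after \Cref{thm:lowerboundsCube}, and it is the reason the cube bound beats the $\rset^n$ bound. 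The $q_i=x_i(x_i-x_0)$ then enter only through the Hilbert-function computation: they generate the (radical) vanishing ideal of $\{0,1\}^n$ and form a regular sequence of degree-$2$ forms, so \Cref{lem:hilfun} gives
\[
\dim\lin\{s_{\sA_{n,2}}(x)\,|\,x\in\{0,1\}^n\}=\binom{n+2}{2}-n\binom{n}{n}+\binom n2\cdot HF_{\pset^n}(-2)=\binom{n+2}{2}-n,
\]
which is the claimed lower bound. (Your phrase ``\Cref{lem:pqResults}(iii) with $k=2$'' should be read with the generators' degree equal to $2$, not $d=1$.) With that correction the argument is complete; the headline step of simply citing \Cref{thm:lowerboundsCube} at $d=1$ is in any case valid, since that theorem is stated for all $n,d\in\nset$.
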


Thus for higher dimensions $n$, even with fixed degree $d$, it is not possible to give upper bounds $\cat_{\sA_{n,d}}\leq c\cdot |\sA_{n,d}|$ with $c<1$ for all $n$.

\begin{cor}
Let $\cX\subseteq\rset^n$ with non-empty interior and $d\in\nset$. For $\varepsilon>0$ there is an $N\in\nset$ such that for every $n\geq N$ there is a moment sequence $s\in\cS_{\sA_{n,d}}$ resp.\ a moment functional $L:\rset[x_1,\dots,x_n]_{\leq d}\rightarrow\rset$ with
\[\cat_{\sA_{n,d}}(s) \quad\geq\quad (1-\varepsilon)\cdot\begin{pmatrix} n+d\\ n\end{pmatrix},\]
i.e., $L_s$ is the conic combination of at least $(1-\varepsilon)\cdot\big(\begin{smallmatrix} n+d\\ n\end{smallmatrix}\big)$ point evaluations $l_{x_i}$.
\end{cor}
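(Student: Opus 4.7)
The plan is to deduce this corollary directly from the second statement of Theorem~\ref{thm:lowerboundLimits}, which asserts $\lim_{n\to\infty}\cat_{\sA_{n,d}}/|\sA_{n,d}|=1$ for each fixed $d\in\nset$. Unpacking this limit via the $\varepsilon$-$N$ definition: for any $\varepsilon>0$ there exists $N\in\nset$ such that
\[
\cat_{\sA_{n,d}} \;\geq\; (1-\varepsilon)\cdot|\sA_{n,d}| \;=\; (1-\varepsilon)\binom{n+d}{n}
\]
for all $n\geq N$. This $N$ is precisely the one promised by the corollary.

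The remaining task is to convert this inequality on the set-level Carath\'eodory number $\cat_{\sA_{n,d}}$ into the existence of an individual witness $s$. By Definition~\ref{dfn:caraNumber} one has $\cat_{\sA_{n,d}}=\max_{s\in\cS_{\sA_{n,d}}}\cat_{\sA_{n,d}}(s)$, and Richter's Theorem~\ref{thm:richter} guarantees that each $\cat_{\sA_{n,d}}(s)$ is an integer bounded above by $|\sA_{n,d}|<\infty$. Hence the maximum is attained by some $s\in\cS_{\sA_{n,d}}$, and that $s$---equivalently its Riesz functional $L=L_s$---will satisfy $\cat_{\sA_{n,d}}(s)\geq (1-\varepsilon)\binom{n+d}{n}$, which is exactly the assertion of the corollary.

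No genuine obstacle arises here, because all the combinatorial heavy lifting---the Hilbert function identities from Lemma~\ref{lem:pqResults} together with the ratio estimate in \eqref{eq:hilbertpolynomialLimit}---has already been carried out in Theorems~\ref{thm:lowerboundsRn}, \ref{thm:lowerboundsCube}, and \ref{thm:lowerboundLimits}. If one prefers an explicit witness rather than an abstract maximizer, Proposition~\ref{prop:momentOnBoundary} will supply one: after an affine shift and rescaling that places the grid $G\subset\rset^n$ inside the non-empty interior of $\cX$, the functional $L=\sum_{x\in G}l_x$ directly realizes the required lower bound.
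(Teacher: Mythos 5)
Your proposal is correct and follows essentially the same route as the paper, whose entire proof is to take the explicit grid-supported functional of \Cref{prop:momentOnBoundary} as the witness, the required ratio estimate being exactly the computation behind \Cref{thm:lowerboundLimits}. Your detour through the set-level number $\cat_{\sA_{n,d}}$ and the attainment of the maximum in \Cref{dfn:caraNumber} is valid (the values are integers bounded by $|\sA_{n,d}|$, so the max is attained), but it is superfluous given that you already name the explicit maximizing witness at the end.
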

\begin{proof}
Choose $s$ resp.\ $L$ as in \Cref{prop:momentOnBoundary}. This has the desired property.
\end{proof}

So even when we work with the probably most well behaved moment problem, i.e., polynomials, the Carath\'eodory number is cursed by high dimensionalities. In the next section we study the consequences of these new lower bounds and their limits for Hankel matrices and flat extension.

\section{Hankel Matrices and Flat Extension}
\label{sec:flat}

Recall that for a finite dimensional space $\cA$ of measurable functions with basis $\sA = \{a_1,\dots,a_m\}$ the \emph{Hankel matrix} $\cH(L)$ of a linear functional $L:\cA^2\rightarrow\rset$ is given by $\cH(L) = (L(a_i a_j))_{i,j=1}^m$, i.e.,
\begin{equation}\label{eq:hankelIntegral}\small 
\cH(L) = \int_\cX \begin{pmatrix}
a_1(x) a_1(x) & \hdots & a_1(x) a_m(x)\\
\vdots & \ddots & \vdots\\
a_1(x) a_m(x) & \hdots & a_m(x) a_m(x)
\end{pmatrix}~\diff\mu(x)
= \int_\cX s_\sA(x)\cdot s_\sA(x)^T~\diff\mu(x)
\end{equation}
if $\mu$ is a (signed) representing measure of $L$. Hence we have the following.

\begin{lem}\label{lem:hankelRank}
Let $\cA$ be a finite dimensional vector space of measurable functions with basis $\sA =\{a_1,\dots,a_m\}$. For $L:\cA^2\rightarrow\rset$ with $L = \sum_{i=1}^k c_i\cdot l_{x_i}$ ($c_i\in\rset$) we have
\[\cH(L)=\sum_{i=1}^k c_i\cdot s_\sA(x_i)\cdot s_\sA(x_i)^T \qquad\text{and}\qquad \rank\cH(L)\leq k.\]
The following are equivalent:
\begin{enumerate}[i)]
\item $\rank\cH(L) = k$.
\item $s_\sA(x_1),\dots,s_\sA(x_k)$ are linearly independent.
\end{enumerate}
\end{lem}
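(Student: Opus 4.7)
The plan is to argue essentially by direct computation followed by a clean linear-algebraic factorization.

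First, I would verify the formula entry-wise. By linearity of $L$, the $(i,j)$-entry of $\cH(L)$ is
\[\cH(L)_{ij} \;=\; L(a_ia_j) \;=\; \sum_{l=1}^k c_l\,a_i(x_l)a_j(x_l),\]
which is exactly the $(i,j)$-entry of $\sum_{l=1}^k c_l\,s_\sA(x_l)s_\sA(x_l)^T$. The rank bound $\rank\cH(L)\leq k$ then drops out because $\cH(L)$ is written as a sum of $k$ rank-at-most-one matrices.

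For the equivalence I would introduce the matrix factorization that makes everything transparent. Let $V=(s_\sA(x_1)\mid\cdots\mid s_\sA(x_k))\in\rset^{m\times k}$ and $D=\diag(c_1,\ldots,c_k)$, so the formula from the first part reads
\[\cH(L)\;=\;V\,D\,V^T.\]
The implication $(i)\Rightarrow(ii)$ is easiest to see contrapositively: if the vectors $s_\sA(x_1),\ldots,s_\sA(x_k)$ are linearly dependent, then $\rank V<k$, so $\rank\cH(L)=\rank(VDV^T)\leq\rank V<k$, contradicting (i). For $(ii)\Rightarrow(i)$, the linear independence says $V$ has full column rank, hence $\ker V=\{0\}$; since $c_1,\dots,c_k\neq 0$ (this is implicit in the atomic representation) $D$ is invertible, and from $VDV^Tx=0$ we deduce successively $DV^Tx=0$ and $V^Tx=0$. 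Therefore $\ker\cH(L)=\ker V^T$, which has dimension $m-k$, so $\rank\cH(L)=k$.

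This argument is mostly routine; the only real subtlety is to point out explicitly that the direction $(ii)\Rightarrow(i)$ uses $c_i\neq 0$, which is why in the statement the scalars are the coefficients of a genuine atomic expansion rather than an arbitrary real linear combination. No step requires anything beyond what is already established in the paper, so I expect the write-up to be short.
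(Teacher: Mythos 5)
Your proof is correct and is essentially the same argument the paper relies on: the paper does not write out a proof at all but defers verbatim to \cite[Prop.\ 17.21]{schmudMomentBook}, whose proof is exactly your factorization $\cH(L)=VDV^T$ with $V=(s_\sA(x_1)\mid\cdots\mid s_\sA(x_k))$ and $D=\diag(c_1,\dots,c_k)$. Your explicit remark that $(ii)\Rightarrow(i)$ needs $c_i\neq 0$ is apt, since the lemma as stated only says $c_i\in\rset$ and the equivalence would fail for a vanishing coefficient.
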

\begin{proof}
By replacing $x^\alpha$, $x^\beta$, and $x^{\alpha+\beta}$ by $a_i$, $a_j$, and $a_{i,j}=a_i\cdot a_j$, respectively, with (\ref{eq:hankelIntegral}) the proof is verbatim the same as in \cite[Prop.\ 17.21]{schmudMomentBook}.
\end{proof}

Note that the previous result holds for signed representing measures.

It is very hard to check whether a linear functional $L$ is a moment functional. If $\cX$ is compact and $\cA^2$ contains an $e>0$ on $\cX$ then one has to check the following condition:
\[L_s(p)\geq 0 \quad\forall p\in\pos(\cA^2,\cX) := \{a\in\cA^2 \,|\, a\geq 0\}.\]
But the set of non-negative functions $\pos(\cA^2,\cX)$ on $\cX$ is in general hard to describe. For example deciding, whether a polynomial $p\in\rset[x_1,\ldots,x_n]_{\leq2d}$ is non-negative, is an NP-hard problem (for fixed $d\geq2$ as a function of $n$), see e.g.\ \cite[p.56]{blekSemiOpt}. One approach to overcome this problem is to approximate non-negative polynomials with sums of squares (SOS): Checking whether a given polynomial is a sum of squares is equivalent to deciding whether a certain semidefinite program (SDP) is feasible, see \cite[\S 4.1.4]{blekSemiOpt}, and (under some mild assumptions) one can solve an SDP up to a fixed precision in time that is polynomial in the program description size \cite[\S 2.3.1]{blekSemiOpt}. The connection between the truncated moment problem and non-negative polynomials run deep and can be found in a large number of publications on the truncated moment problem, see e.g.\ \cite{akhiezClassical,kreinMarkovMomentProblem,marshallPosPoly,lauren09,lasserreSemiAlgOpt,schmudMomentBook}.

\emph{Flat extension} is another method to determine whether a linear functional $L:\rset[x_1,\dots,x_n]_{\leq 2d}\rightarrow\rset$ is a moment functional, see e.g.\ \cite{curto2,curto3,lauren09,lauren09a,schmudMomentBook}. Let $D\geq d$ and $L_0:\rset[x_1,\dots,x_n]_{\leq 2D}\rightarrow\rset$ be a linear functional that extends $L$. An extension $L_1:\rset[x_1,\dots,x_n]_{\leq 2D+2}$ of $L_0$ is called \emph{flat with respect to $L_0$} if $\rank \cH(L_1) = \rank \cH(L_0)$. Then by the flat extension theorem, see \cite{curto2}, \cite{curto3} or e.g.\ \cite[Thm.\ 17.35]{schmudMomentBook}, there are linear functionals $L_i:\rset[x_1,\dots,x_n]_{\leq 2D+2i}$ with $\rank(L_0)=\rank(L_i)$ such that $L_{i}$ extends $L_{i-1}$ for all $i\in\nset$. These determine a linear functional $L_\infty:\rset[x_1,\dots,x_n]\rightarrow\rset$ which is called a \emph{flat extension of $L_0$} (\emph{to all $\rset[x_1,\dots,x_n]$}), i.e., every restriction $L_\infty|_{\rset[x_1,\dots,x_n]_{\leq 2D'+2}}$ is a flat extension of $L_\infty|_{\rset[x_1,\dots,x_n]_{\leq 2D'}}$ for all $D'\geq D$. Exists such a flat extension $L_\infty$, then by the flat extension theorem $L_0$ is a moment functional if $L_0(a^2)\geq 0$ for all $a\in\rset[x_1,\dots,x_n]_{\leq D}$. In this case $L$ is of course a moment functional as well. It was open to which degree $2D$ the functional $L$ must be extended in order to have a flat extension. The upper bound of $D\leq 2d$ follows immediately from the Carath\'eodory bound \cite{curto2,curto3}. Part one of the following theorem is due to Curto and Fialkow. Our new lower bounds on the Carath\'eodory number show that $D = 2d$ is attained and is stated in part two of the following theorem.

\begin{thm}\label{thm:flatExtensionBound}\
\begin{enumerate}[i)]
\item For every moment functional $L:\rset[x_1,\dots,x_n]_{\leq 2d}\rightarrow\rset$ there is a $D\leq 2d$ and an extension to a moment functional \mbox{$L_0:\rset[x_1,\dots,x_n]_{\leq 2D}\rightarrow\rset$} that admits a flat extension $L_{\infty}:\rset[x_1,\dots,x_n]\to\rset$.

\item For every $d\in\nset$ there is an $N\in\nset$ such that for every $n\geq N$ there is a moment functional $L$ on $\rset[x_1,\dots,x_n]_{\leq 2d}$ {such that $D=2d$ in (i) is required}.
\end{enumerate}
\end{thm}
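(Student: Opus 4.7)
Part (i) is the Curto--Fialkow argument, which I would reprove briefly. Pick a minimal conic representation $L=\sum_{i=1}^k c_i\, l_{x_i}$ with $c_i>0$ and $k=\cat_{\sA_{n,2d}}(L)$. A standard convex-cone Carath\'eodory reduction (subtracting a suitable multiple of any nontrivial linear relation $\sum a_i s_{\sA_{n,2d}}(x_i)=0$ from the conic combination cancels at least one $c_i$ while keeping the remaining coefficients $\geq 0$, contradicting minimality) forces $s_{\sA_{n,2d}}(x_1),\dots,s_{\sA_{n,2d}}(x_k)$ to be linearly independent. Set $D=2d$, let $L_\infty:\rset[x_1,\dots,x_n]\to\rset$ be defined by $\mu=\sum c_i\delta_{x_i}$, and put $L_0:=L_\infty|_{\rset[x_1,\dots,x_n]_{\leq 4d}}$. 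By \Cref{lem:hankelRank} and the just-established independence, $\rank\cH(L_0)=k$, and the same independence -- which only strengthens when the evaluation vectors are lengthened -- yields $\rank\cH(L_\infty|_{\rset[x_1,\dots,x_n]_{\leq 2D'}})=k$ for every $D'\geq 2d$. This is exactly the flat tower of extensions required by the definition of flat extension.

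For part (ii) the key input is the corollary immediately preceding the theorem, applied with $2d$ in place of $d$ and $\cX=\rset^n$: for every $\varepsilon>0$ there is an $N=N(\varepsilon,d)\in\nset$ such that for each $n\geq N$ there exists a moment functional $L:\rset[x_1,\dots,x_n]_{\leq 2d}\to\rset$ with
\[\cat_{\sA_{n,2d}}(L)\;\geq\;(1-\varepsilon)\binom{n+2d}{n}.\]
I would then argue by contradiction: assume some $D<2d$ is admissible in (i) for this $L$, and fix a moment-functional extension $L_0:\rset[x_1,\dots,x_n]_{\leq 2D}\to\rset$ of $L$ with flat extension $L_\infty$. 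The flat extension theorem furnishes $L_\infty$ with a representing measure of exactly $\rank\cH(L_0)$ atoms, whose restriction to $\rset[x_1,\dots,x_n]_{\leq 2d}$ represents $L$ with the same number of atoms. Hence
\[\cat_{\sA_{n,2d}}(L)\;\leq\;\rank\cH(L_0)\;\leq\;\binom{n+D}{n}\;\leq\;\binom{n+2d-1}{n}.\]

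Combining the two estimates yields $1-\varepsilon\leq \binom{n+2d-1}{n}\big/\binom{n+2d}{n}=2d/(n+2d)$, which fails as soon as, e.g., $\varepsilon=\tfrac12$ and $n\geq\max\{N(\tfrac12,d),\,2d+1\}$. The main obstacle is purely quantitative: one has to balance the $\varepsilon$-loss from the corollary against the Hankel-size gap $\binom{n+2d}{n}/\binom{n+2d-1}{n}=(n+2d)/(2d)$. This ratio tends to infinity as $n\to\infty$ for fixed $d$, so any fixed $\varepsilon<1$ works eventually, mirroring the asymptotics of \Cref{thm:lowerboundLimits}; no new algebraic or geometric input beyond \Cref{sec:cara} is required.
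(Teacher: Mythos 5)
Your proof is correct and follows essentially the same route as the paper: part (i) is the same construction (an atomic representing measure with linearly independent point evaluations, so that every truncated Hankel matrix of $L_\infty$ has the same rank), and part (ii) is the same contradiction between the lower bound $\cat_{\sA_{n,2d}}(L)\geq(1-\varepsilon)\binom{n+2d}{n}$ from \Cref{prop:momentOnBoundary} and the upper bound $\rank\cH(L_0)\leq\binom{n+D}{n}$ forced by a flat extension at level $D<2d$. The only cosmetic difference is that you make the paper's limit argument ($n\to\infty$ via \Cref{thm:lowerboundLimits}) quantitative by fixing $\varepsilon=\tfrac12$ and the explicit ratio $2d/(n+2d)$.
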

\begin{proof}
(i): By \cite[Cor.\ 14]{didio17Cara} we have $C:=\cat_{\sA_{n,2d}}(L)\leq \left(\begin{smallmatrix} n+2d\\ n\end{smallmatrix}\right)-1$, i.e.,
\[L = \sum_{i=1}^{C} c_i\cdot {l}_{x_i} \quad\text{with}\quad c_i>0\]
and the $l_{x_i}$ are linearly independent on $\rset[x_1,\dots,x_n]_{\leq 2d}$. Then $L_{\infty}:\rset[x_1,\dots,x_n]$ $\to\rset$ defined by $L_{\infty}(f):=\sum_{i=1}^{C} c_i\cdot {f}({x_i})$ is a flat extension of $L_0:=L|_{\rset[x_1,\dots,x_n]_{\leq 4d}}$.

(ii): Let $s\in\cS_{\sA_{n,2d}}$ resp.\ $L_s:\rset[x_1,\dots,x_n]_{\leq 2d}\rightarrow\rset$ as in \Cref{prop:momentOnBoundary} and assume $D = 2d - c$, $c\in\nset$. From the condition $\cat_{\sA_{n,2d}}(s) \leq \left(\begin{smallmatrix} n+D\\ n\end{smallmatrix}\right)$ that the Hankel matrix of the flat extension must be at least the size of the Carath\'eodory number of $s$ we find that
\begin{multline*}
1 \leq \lim_{n\rightarrow\infty} \frac{\begin{pmatrix}n + 2d - c\\ n\end{pmatrix}}{\cat_{\sA_{n,2d}}(s)}
= \lim_{n\rightarrow\infty} \frac{\begin{pmatrix}n + 2d - c\\ n\end{pmatrix}}{\begin{pmatrix} n + 2d\\ n\end{pmatrix}} \cdot \underbrace{\frac{\begin{pmatrix} n+2d\\ n\end{pmatrix}}{\cat_{\sA_{n,2d}}(s)}}_{\rightarrow 1\ \text{by Thm.\ \ref{thm:lowerboundLimits}}}\\
= \lim_{n\rightarrow\infty} \frac{(2d-c+1)\cdots (2d)}{(n+2d-c+1)\cdots (n+2d)} = 0.
\end{multline*}
This is a contradiction, i.e., $c=0$ must hold.
\end{proof}

\begin{exm}
Consider the moment sequences $s = (s_\alpha)_{\alpha\in\nset_0^n:|\alpha|\leq 2d}$ resp.\ functionals $L:\rset[x_1,\dots,x_n]_{\leq 2d}\to\rset$ from \Cref{prop:momentOnBoundary} supported on the grid $\{1,\dots,d\}^n$. The condition $\cat_{\sA_{n,2d}}(s) \leq \left(\begin{smallmatrix} n+D\\ n\end{smallmatrix}\right)$, meaning that the size of the Hankel matrix must be at least the size of Carath\'eodory number of $s$, shows that $(n,d)$ = $(9,2)$, $(7,3)$, $(6,4)$, $(6,5)$, and $(n',6)$, for all $n'\geq 6$, are small examples where the worst case extension to degree $D=2d$ is attained. Even for $d = 10^{15}$ the worst case extension is already necessary for $n=51$.
\end{exm}


\providecommand{\bysame}{\leavevmode\hbox to3em{\hrulefill}\thinspace}
\providecommand{\MR}{\relax\ifhmode\unskip\space\fi MR }
\providecommand{\MRhref}[2]{%
  \href{http://www.ams.org/mathscinet-getitem?mr=#1}{#2}
}
\providecommand{\href}[2]{#2}

\end{document}